\let\tilde\widetilde
\def\limnto{\mathrel{\mathop{\longrightarrow\kern 0pt}\limits_{n\to\infty}}}
\def\1{\mbox{1\hspace{-.25em}I}}
\let\tilde\widetilde
\newcommand{\theoremname}{Theorem}
\newcommand{\definitionname}{Definition}
\newcommand{\propositionname}{Proposition}
\newcommand{\lemmaname}{Lemma}
\newcommand{\corollaryname}{Corollary}
\newcommand{\propertyname}{Property}
\newcommand{\exercisename}{Exercise}
\newcommand{\remarkname}{Remark}
\newcommand{\recallname}{Recall}
\newcommand{\notationname}{Notation}
\newtheorem{theorem}{\theoremname}[section]
\newtheorem{lemma}[theorem]{\lemmaname}
\newtheorem{definition}[theorem]{\definitionname}
\newtheorem{remark}{\remarkname}
\newtheorem{proposition}[theorem]{\propositionname}
\providecommand{\fg}{\ifdim\lastskip>\z@\unskip\fi~\frqq}}
\begin{document}
\title{\Large{Another method of viscosity solutions of integro-differential partial equation by concavity}}
\date{\today}
\selectlanguage{english}

\author{\textsc{L.\ SYLLA\footnotemark[2]}}

%\author{\textsc{S.\ HAMAD\`ENE\footnotemark[1]} ~~and~~ \textsc{L.\ SYLLA\footnotemark[2]}}

%\maketitle
%\footnotetext[1]{Universit\'e du Maine, LMM, Avenue Olivier Messiaen, 72085 Le Mans, Cedex 9, France, e-mail: hamadene@univ-lemans.fr}
\footnotetext[2]{Universit\'e Gaston Berger, LERSTAD, CEAMITIC, e-mail: sylla.lamine@ugb.edu.sn}
\maketitle

\begin{abstract}
In this paper we consider the problem of viscosity solution of integro-partial  differential equation(IPDE in short) via the solution of backward stochastic differential  equations(BSDE in short) with jumps where L\'evy's measure is not necessarily finite. We mainly use the concavity of the generator at the level of its second variable to establish the existence and uniqueness of the solution with non local terms.\\ 
\end{abstract}
\bigskip{}

\textbf{Keywords}: Integro-partial differential equation; Backward stochastic differential equations with jumps; Viscosity solution; Non-local operator; Concavity.\\
\\
\textbf{MSC 2010 subject classifications}: 35D40, 35R09, 60H30.

\bigskip{}

%\textsl{AMS subject classification}: 62F03, 62F05, 62F12
%\foreignlanguage{english}{ }

\newpage
\section{Introduction}
We consider the following system of integro-partial differential equation, which is a function of $(t,x)$: $\forall i\in\{1,\ldots,m\}$,
\begin{equation}\label{eq1}
\left
\{\begin{array}{ll}
-\partial_{t}u^{i}(t,x)-b(t,x)^{\top}\mathrm{D}_{x}u^{i}(t,x)-\frac{1}{2}\mathrm{Tr}(\sigma\sigma^{\top}(t,x)\mathrm{D}^{2}_{xx}u^{i}(t,x))-\mathrm{K}_{i}u^{i}(t,x)\\
-\mathit{h}^{(i)}(t,x,u^{i}(t,x),(\sigma^{\top}\mathrm{D}_{x}u^{i})(t,x),\mathrm{B}_{i}u^{i}(t,x))=0,\quad (t,x)\in\left[ 0,T\right] \times\mathbb{R}^{k};\\
u^{i}(T,x)=g^{i}(x);
\end{array}
\right.
\end{equation}
where the operators $\mathrm{B}_{i}$ and $\mathrm{K}_{i}$ are defined as follows:
\begin{eqnarray}
\mathrm{B}_{i}u^{i}(t,x) & = & \displaystyle\int_{\mathrm{E}}\gamma^{i}(t,x,e)(u^{i}(t,x+\beta(t,x,e))-u^{i}(t,x))\lambda(\mathrm{d} e);\label{2.2}\\
\mathrm{K}_{i}u^{i}(t,x) & = & \displaystyle\int_{\mathrm{E}}(u^{i}(t,x+\beta(t,x,e))-u^{i}(t,x)-\beta(t,x,e)^{\top}\mathrm{D}_{x}u^{i}(t,x))\lambda(de).\nonumber
\end{eqnarray}
The resolution of (\ref{eq1}) is in connection with the following system of backward stochastic differential equations with jumps :

\begin{equation}\label{eq2}
\left
\{\begin{array}{ll}
dY^{i;t,x}_{s}=-f^{(i)}(s,X^{t,x}_{s},Y^{i;t,x}_{s},Z^{i;t,x}_{s},U^{i;t,x}_{s})ds+Z^{i;t,x}_{s}\mathrm{d}
\mathrm{B}_{s}+\displaystyle\int_{\mathrm{E}}\mathrm{U}^{i;t,x}
_{s}(e)\tilde{\mu}(\mathrm{d}s,\mathrm{d}e),\quad s\leq T;\\
Y^{i;t,x}_{T}=g^{i}(X^{t,x}_{T}); 
\end{array}
\right.
\end{equation}
and\\
the following standard stochastic differential equation of diffusion-jump type:
\begin{equation}\label{2.4}
X^{t,x}_{s}=x+\displaystyle\int^{s}_{t}b(r,X^{t,x}_{r})\, \mathrm{d}r+\displaystyle\int^{s}_{t}\sigma(r,X^{t,x}_{r})\, \mathrm{d}B_{r}+\displaystyle\int^{s}_{t}
\displaystyle\int_{E}\beta(r,X^{t,x}_{r-},e)\tilde{\mu}(dr,de),
\end{equation}
for $s\in[t,T]$ and $X^{t,x}_{s}=x$ if $s\leq t$.\\

Several authors have studied (\ref{eq1}) by various and varied methods.
Among others we can quote Barles and al. \cite{bar}, who use the theorem of comparison by supposition the monotony of the generator; Ouknine and Hamad\`ene \cite{hamaOuk} use the penalization method.\\
But recently Hamad\`ene does a relaxation on the hypothesis of monotony on the generator to introduce a new class of functions(see \cite{hama} page 216) for the resolution of (\ref{eq1}).\\
In this work we propose to solve (\ref{eq1}) by relaxing the monotonicity of the generator and the class of fonctions introduced in \cite{hama} and assuming that $\lambda=\infty$ and the concavity of the generator at the level of its second variable. We recall that this technic was used in \cite{fan} for the resolution of BSDE.\\
Our paper is organized as follows: in the next section we give the notations and the assumptions; in section $3$ we recall a number of existing results; in section $4$ we build estimates and properties for a good resolution of our problem;  section $5$ is reserved to give our main result.\\
And at the end, classical definition of the concept of viscosity solution is put in appendix.

\section{Notations and assumptions}
Let $\left(\Omega,\mathcal{F},(\mathcal{F}_{t})_{t\leq T},\mathbb{P}\right)$ be a stochastic basis such that $\mathcal{F}_{0}$ contains all $\mathbb{P}-$null sets of $\mathcal{F}$, and $\mathcal{F}_{t}=\mathcal{F}_{t+}:=\bigcap_{\epsilon>0}
\mathcal{F}_{t+\epsilon},~t\geq 0$, and we suppose that the filtration is generated by the two mutually independents processes:\\
(i) $B:=(B_{t})_{t\geq 0}$ a $d$-dimensional Brownian motion and,\\
(ii) a Poisson random measure $\mu$ on $\mathbb{R}^{+}\times\mathrm{E}$ where $\mathrm{E}:=\mathbb{R}^{\ell}-\{0\}$ is equipped with its Borel field $\mathcal{E}$ $(\ell\geq 1)$. The compensator $\nu(\mathrm{d}t,\mathrm{d}e)=\mathrm{d}t\lambda(\mathrm{d}e)$ is such that $\{\tilde{\mu}(\left[0,t\right]\times A)=(\mu-\lambda)(\left[ 0,t\right]\times A)\}_{t\geq 0}$ is a martingale for all $A\in\mathcal{E}$ satisfying $\lambda(A)<\infty$. We also assume that $\lambda$ is a $\sigma$-finite measure on $(E,\mathcal{E})$, integrates the function $(1\wedge\mid e\mid ^{2})$ and $\lambda(E)=\infty$.\\
Let's now introduce the following spaces:\\ 
(iii) $\mathcal{P}~(resp.~\mathbf{P})$ the field on $\left[0,T\right]\times \Omega$ of $\mathcal{F}_{t\leq T}$-progressively measurable (resp. predictable) sets.\\
(iv) For $\kappa\geq 1$, $\mathbb{L}^{2}_{\kappa}(\lambda)$ the space of Borel measurable functions $\varphi:=(\varphi(e))_{e\in E}$ from $E$ into $\mathbb{R}^{\kappa}$ such that 
$\|\varphi\|^{2}_{\mathbb{L}^{2}_{\kappa}(\lambda)}=\displaystyle\int_{E}\left|\varphi(e)\right|^{2}_{\kappa}\lambda(\mathrm{d}e)<\infty$; $\mathbb{L}^{2}_{1}(\lambda)$ will be simply denoted by $\mathbb{L}^{2}(\lambda)$;\\
(v) $\mathcal{S}^{2}(\mathbb{R}^{\kappa})$ the space of RCLL (for right continuous with left limits) $\mathcal{P}$-measurable and $\mathbb{R}^{\kappa}$-valued processes such that $\mathbb{E}[\sup_{s\leq T} \left|Y_{s}\right|^{2}]<\infty$;\\
(vi) $\mathbb{H}^{2}(\mathbb{R}^{\kappa\times d})$ the space of processes $Z:=(Z_{s})_{s\leq T}$ which are $\mathcal{P}$-measurable, $\mathbb{R}^{\kappa\times d}$-valued and satisfying $\mathbb{E}\left[\displaystyle\int^{T}_{0}\left|Z_{s}\right|^{2}\, \mathrm{d} s\right]<\infty$;\\
(vii) $\mathbb{H}^{2}(\mathbb{L}^{2}_{\kappa}(\lambda))$ the space of processes $U:=(U_{s})_{s\leq T}$ which are $\mathbf{P}$-measurable, $\mathbb{L}^{2}_{\kappa}(\lambda)$-valued and satisfying $\mathbb{E}\left[\displaystyle\int^{T}_{0}\|U_{s}(\omega)\|^{2}_{\mathbb{L}^{2}_{\kappa}(\lambda)}\, \mathrm{d} s\right]<\infty$;\\
(viii) $\Pi_{g}$ the set of deterministics functions\\ $\varpi:~(t,x)\in [0,T]\times \mathbb{R}^{\kappa}\mapsto\varpi(t,x)\in\mathbb{R}$ of polynomial growth, i.e., for which there exists two non-negative constants $C$ and $p$ such that for any $(t,x)\in [0,T]\times \mathbb{R}^{\kappa}$,
$$\left|\varpi(t,x)\right|\leq C(1+\left|x\right|^{p}).$$
The subspace of $\Pi_{g}$ of continuous functions will be denoted by $\Pi^{c}_{g}$;\\
(ix) $\mathcal{M}$ the class of functions which satisfy the $p$-order Mao condition in $x$ i.e. If $f\in\mathcal{M}$ then there  exists a nondecreasing, continuous and concave function $\rho(\cdot):\mathbb{R}^{+}\mapsto\mathbb{R}^{+}$ with $\rho(0)=0$, $\rho(u)>0$, for $u>0$ and $\displaystyle\int_{0^{+}}\frac{du}{\rho(u)}=+\infty$, such that $d\mathbb{P}\times dt-$a.e., $\forall x, x^{'}\in\mathbb{R}^{k}\textrm{ and }\forall p\geq 2$,
$$|f(t,x,y,z,q)-f(t,x^{'},y,z,q)|\leq\rho^{\frac{1}{p}}(|x-x^{'}|^{p});$$
(x) For any process $\theta:=(\theta_{s})_{s\leq T}$ and $t\in(0,T],~\theta_{t-}=\lim_{s\nearrow t}\theta_{s}$ and\\
$$\Delta_{t}\theta=\theta_{t}-\theta_{t-}.$$
Now let $b$ and $\sigma$  be the following functions:
$$b:(t,x)\in [0,T]\times \mathbb{R}^{k}\mapsto b(t,x)\in\mathbb{R}^{k};$$
$$\sigma:(t,x)\in [0,T]\times \mathbb{R}^{k}\mapsto\sigma(t,x)\in\mathbb{R}^{k\times d}.$$
\begin{equation}
\textrm{We assume that } b \textrm{ and } \sigma \textrm{ belong to } \mathcal{M}\label{2.5}
\end{equation}

%The functions $b$ and $\sigma$ are of linear growth, i.e., there exists a constant $C$ such that
%\begin{equation}\label{2.6}
%\forall (t,x,x')\in[0,T]\times \mathbb{R}^{k+k},~\left|b(t,x)\right|+\left|\sigma(t,x)\right|\leq C\left|1+x\right|.
%\end{equation}
Let $\beta:(t,x,e)\in [0,T]\times \mathbb{R}^{k}\times E\mapsto \beta(t,x,e)\in\mathbb{R}^{k}$ be a measurable function such that for some real constant $C$, and for all $e\in E$,
\begin{eqnarray}\label{2.7}
& (i) & \left|\beta(t,x,e)\right|  \leq  C(1\wedge\left|e\right|); \\ \nonumber  
& (ii) & \beta \textrm{ belongs to } \mathcal{M} ;\\
& (iii) & \textrm{the mapping}~(t,x)\in[0,T]\times \mathbb{R}^{k}\mapsto \beta(t,x,e)\in\mathbb{R}^{k}~\textrm{is continuous for any}~\mathrm{e}\in\mathrm{E}\nonumber.
\end{eqnarray}
The functions $(g^{i})_{i=1,m}$ and $(h^{(i)})_{i=1,m}$ be two functions defined as follows: for $i=1,\ldots,m$,
\begin{eqnarray}
g^{i}:\mathbb{R}^{k} & \longrightarrow & \mathbb{R}^{m}\nonumber \\
{}{}x & \longmapsto & g^{i}(x)\nonumber
\end{eqnarray}
and
\begin{eqnarray}
h^{(i)}:[0,T]\times\mathbb{R}^{k+m+d+1} & \longrightarrow & \mathbb{R}\nonumber \\
{}{}(t,x,y,z,q) & \longmapsto & h^{(i)}(t,x,y,z,q).\nonumber
\end{eqnarray}
Moreover we assume they satisfy:\\
(\textbf{H1}): For any $i\in\left\lbrace 1,\ldots,m\right\rbrace$, the function $g^{i}$ belongs to $\mathcal{M}$.\\
\\
(\textbf{H2}): For any $i\in\left\lbrace 1,\ldots,m\right\rbrace$,
\begin{eqnarray} 
& (i) &~\textrm{the function}~h^{(i)}~\textrm{is Lipschitz in}~ (y,z,q)~\textrm{uniformly in}~(t,x),~\textrm{i.e., there exists a real constant}\nonumber\\
& {}{} & \textrm{C such that for any}~ (t,x)\in[0,T]\times\mathbb{R}^{k}, (y,z,q)~\textrm{and}~(y',z',q')~\textrm{elements of}~\mathbb{R}^{m+d+1},\nonumber\\
& {}{} &\left|h^{(i)}(t,x,y,z,q)-h^{(i)}(t,x,y',z',q')\right|\leq C(\left|y-y'\right|+\left|z-z'\right|+\left|q-q'\right|);\label{2.11}\\
& (ii) & ~\textrm{the}~(t,x)\mapsto h^{(i)}(t,x,y,z,q), ~\textrm{for fixed}~ (y,z,q)\in\mathbb{R}^{m+d+1},~\textrm{belongs uniformly to}~\mathcal{M},\nonumber
\end{eqnarray}
Next let $\gamma^{i},~i=1,\ldots,m$ be Borel measurable functions defined from $[0,T]\times\mathbb{R}^{k}\times E$ into $\mathbb{R}$ and satisfying:
\begin{eqnarray}\label{2.12} 
& (i) &\left|\gamma^{i}(t,x,e)\right|\leq C(1\wedge\left|e\right|);\nonumber\\
& (ii) & \gamma^{i} \textrm{ belongs to }\mathcal{M};\\
& (iii) & \textrm{the mapping}~t\in[0,T]\mapsto \gamma^{i}(t,x,e)\in\mathbb{R}~ \textrm{is continuous for any}~(x,e)\in\mathbb{R}^{k}\times E.\nonumber
\end{eqnarray}
Finally we introduce the following functions $(f^{(i)})_{i=1,m}$ defined by:
\begin{equation}\label{2.13}
\forall (t,x,y,z,\zeta)\in[0,T]\times\mathbb{R}^{k+m+d}\times \mathbb{L}^{2}(\lambda),~f^{(i)}(t,x,y,z,\zeta):=h^{(i)}\left(t,x,y,z,\displaystyle\int_{E}\gamma^{i}(t,x,e)\zeta(e)\lambda(de)\right).
\end{equation}
The functions $(f^{(i)})_{i=1,m}$, enjoy the two following properties:

\begin{eqnarray}\label{2.15} 
& (a) &~\textrm{The function}~f^{(i)}~\textrm{is Lipschitz in}~ (y,z,\zeta)~\textrm{uniformly in}~(t,x),~\textrm{i.e., there exists a real constant}\nonumber\\
& {}{} & \textrm{C such that}\nonumber\\
& {}{} &\left|f^{(i)}(t,x,y,z,\zeta)-f^{(i)}(t,x,y',z',\zeta')\right|\leq C(\left|y-y'\right|+\left|z-z'\right|+\|\zeta-\zeta'\|_{\mathbb{L}^{2}(\lambda)});\\
\nonumber
& {}{} & \textrm{since}~h^{(i)}~\textrm{is uniformly Lipschitz in}~(y,z,q)~\textrm{and}~ \gamma^{i}~\textrm{verifies (\ref{2.12})-(i)};\nonumber\\
& (b) &~\textrm{The function}~(t,x)  \in[0,T]\times\mathbb{R}^{k}\mapsto f^{(i)}(t,x,0,0,0)~\textrm{belongs}~to~\Pi^{c}_{g};\nonumber\\
\nonumber
& {}{} & \textrm{and then}~ \mathbb{E}\left[\displaystyle\int^{T}_{0}\left|f^{(i)}(r,X^{t,x}_{r},0,0,0)\right|^{2}\,dr \right]<\infty.
\end{eqnarray}

\section{Some results in backward stochastic differential equation with jumps}
\subsection{A class of diffusion processes with jumps}
Let $(t,x)\in[0,T]\times\mathbb{R}^{d}$ and $(X^{t,x}_{s})_{s\leq T}$ be the stochastic process solution of (\ref{2.4}).
Under assumptions (\ref{2.5})-(\ref{2.7}) the solution of Equation (\ref{2.4}) exists and is unique (see \cite{fuj} for more details).
We state some properties of the process $\{(X^{t,x}_{s}),~s\in[0,T]\}$ which can found in \cite{fuj}. 
\begin{proposition}\label{pro3.1}
For each $t\geq 0$, there exists a version of $\{(X^{t,x}_{s}),~s\in[t,T]\}$ such that $s\rightarrow X^{t}_{s}$ is a $C^{2}(\mathbb{R}^{d})$-valued rcll process. Moreover it satisfies the following estimates: $\forall p\geq 2,~x,x'\in\mathbb{R}^{d}$ and $s\geq t$,
\begin{eqnarray}
\mathbb{E}[\sup_{t\leq r\leq s} \left|X^{t,x}_{r}-x\right|^{p}] & \leq &  M_{p}(s-t)(1+\left|x\right|^{p});\nonumber\\
\mathbb{E}[\sup_{t\leq r\leq s} \left|X^{t,x}_{r}-X^{t,x'}_{r}-(x-x')^{p}\right|^{p}] & \leq &  M_{p}(s-t)(\left|x-x'\right|^{p});\label{3.16}
\end{eqnarray}
for some constant $M_{p}$.
\end{proposition}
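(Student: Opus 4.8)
The statement is the standard regularity theory for stochastic flows of jump-diffusions, and the route I would take, essentially that of \cite{fuj}, splits into the two a priori estimates $(\ref{3.16})$ and then the $C^{2}$-flow property. For the first bound, fix $p\ge 2$, set $\tau_{N}=\inf\{s\ge t:\ |X^{t,x}_{s}|\ge N\}\wedge T$ so that the stochastic integrals stopped at $\tau_{N}$ are true martingales, derive the estimate uniformly in $N$, and let $N\to\infty$ by Fatou. Splitting $X^{t,x}_{r}-x$ into its $dr$, $dB_{r}$ and $\tilde\mu(dr,de)$ parts, I would bound the first by H\"older in time, the second by the Burkholder--Davis--Gundy inequality, and the third by the BDG inequality for purely discontinuous martingales, which yields the terms $\E\big(\int_{t}^{s}\!\int_{E}|\beta|^{2}\lambda(de)dr\big)^{p/2}$ and $\E\int_{t}^{s}\!\int_{E}|\beta|^{p}\lambda(de)dr$; both are finite since $|\beta(r,x,e)|\le C(1\wedge|e|)$, $\lambda$ integrates $1\wedge|e|^{2}$, and $1\wedge|e|^{p}\le 1\wedge|e|^{2}$ for $p\ge 2$. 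Because $b,\sigma,\beta$ are of linear growth in $x$ --- a consequence of the $\mathcal{M}$-condition, since a concave $\rho$ with $\rho(0)=0$ obeys $\rho(u)\le\rho(1)(1+u)$ --- and $|X_{r}|^{p}\le 2^{p-1}(|X_{r}-x|^{p}+|x|^{p})$, one reaches
$$\E\Big[\sup_{t\le r\le s}|X^{t,x}_{r}-x|^{p}\Big]\le C(s-t)(1+|x|^{p})+C\int_{t}^{s}\E\Big[\sup_{t\le v\le r}|X^{t,x}_{v}-x|^{p}\Big]dr,$$
and Gronwall's lemma gives the first line of $(\ref{3.16})$ with $M_{p}$ depending only on $p,T,C$.

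For the second line I would repeat the computation for $\Delta_{r}:=X^{t,x}_{r}-X^{t,x'}_{r}-(x-x')$; the coefficient increments now enter, and $(\ref{2.5})$ and $(\ref{2.7})$ give $|b(r,X^{t,x}_{r})-b(r,X^{t,x'}_{r})|^{p}\le\rho(|X^{t,x}_{r}-X^{t,x'}_{r}|^{p})$ and similarly for $\sigma,\beta$. Writing $\psi(s)=\E[\sup_{t\le r\le s}|\Delta_{r}|^{p}]$ and using Jensen's inequality for the concave $\rho$ to move the expectation inside $\rho$, one gets $\psi(s)\le C\int_{t}^{s}\rho\big(2^{p-1}|x-x'|^{p}+2^{p-1}\psi(r)\big)dr$, and a Bihari-type comparison --- valid because $\int_{0^{+}}du/\rho(u)=+\infty$ --- or plainly Gronwall when $\rho$ can be taken linear on the relevant range, yields $\psi(s)\le M_{p}(s-t)|x-x'|^{p}$, i.e. the second line of $(\ref{3.16})$.

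For the $C^{2}(\mathbb{R}^{d})$-valued rcll property I would follow \cite{fuj}. The rcll regularity in $s$ is immediate, $X^{t,x}$ being the sum of the continuous process $x+\int b\,dr+\int\sigma\,dB$ and the c\`adl\`ag purely discontinuous process $\int\!\int\beta\,\tilde\mu$. Continuity of $x\mapsto X^{t,x}_{s}$ on compact sets, uniformly in $s$, follows from Kolmogorov--Chentsov applied to $\E\sup_{s}|X^{t,x}_{s}-X^{t,x'}_{s}|^{p}\le C_{p}|x-x'|^{p}$, which holds for every $p$ by the previous step. Differentiability comes from formally differentiating $(\ref{2.4})$ in $x$: the first variation process $\mathrm{D}_{x}X^{t,x}$ solves the linear jump-SDE with coefficients $\mathrm{D}_{x}b,\mathrm{D}_{x}\sigma,\mathrm{D}_{x}\beta$ evaluated along $X^{t,x}$, one checks this equation is well posed with finite $L^{p}$-moments, and then that the difference quotients $(X^{t,x+he_{j}}-X^{t,x})/h$ converge to it in $L^{p}$; one more iteration gives the second variation process and hence $C^{2}$-regularity, the rcll property of $s\mapsto X^{t}_{s}$ as a $C^{2}$-valued process following from the analogous estimates for $\mathrm{D}_{x}X$ and $\mathrm{D}^{2}_{xx}X$.

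I expect this last step to be the only delicate one: whereas the scalar moment bounds need only linear growth and the $\mathcal{M}$-modulus, the $C^{2}$-regularity of the flow requires genuine smoothness of $b,\sigma,\beta$ in $x$ (in practice $C^{2}_{b}$, stronger than the $\mathcal{M}$-condition as stated) together with careful $L^{p}$-control of the variation processes and of the Taylor remainders of the coefficients --- which is precisely why the result is invoked from \cite{fuj} rather than reproved here.
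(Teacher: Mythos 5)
The paper offers no proof of this proposition at all: it is quoted verbatim from Fujiwara--Kunita \cite{fuj}, so the only comparison available is with the standard argument in that reference. Your outline (localization, H\"older plus Burkholder--Davis--Gundy for the Brownian part and its purely discontinuous analogue for the $\tilde\mu$-integral, Gronwall for the first line, the same scheme for the increment process $\Delta_r=X^{t,x}_r-X^{t,x'}_r-(x-x')$, then Kolmogorov--Chentsov and the first and second variation equations for the $C^2$-flow property) is exactly that standard route, and the first moment estimate together with your closing caveat -- that the $C^2$-regularity genuinely needs smooth ($C^2_b$-type) coefficients, beyond the $\mathcal{M}$-condition, which is why the result is imported from \cite{fuj} -- is sound.

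There is, however, one concrete gap: the derivation of the second line of (\ref{3.16}) from the $\mathcal{M}$-condition via Bihari does not work. From $\psi(s)\le C\int_t^s\rho\bigl(2^{p-1}|x-x'|^p+2^{p-1}\psi(r)\bigr)\,dr$, Bihari's inequality only gives $\psi(s)\le G^{-1}\bigl(G(c\,|x-x'|^p)+C(s-t)\bigr)$ with $G(u)=\int_{u_0}^{u}dv/\rho(v)$, which for a genuinely non-Lipschitz concave $\rho$ (e.g.\ $\rho(u)\sim u\log(1/u)$ near $0$) is \emph{not} of the form $M_p(s-t)|x-x'|^p$ -- it produces bounds like $|x-x'|^{p\,e^{-C(s-t)}}$, and moreover cannot vanish linearly in $(s-t)$ since $\psi$ sits inside $\rho$ together with the nonzero constant $|x-x'|^p$. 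Your fallback ``plainly Gronwall when $\rho$ can be taken linear on the relevant range'' is precisely the Lipschitz case, i.e.\ the hypothesis of \cite{fuj}; under the paper's own assumption (\ref{2.5}) alone the estimate can only be obtained in a $\rho$-modulus form (which is in fact how it is consumed later, wrapped inside $\rho$ in Proposition \ref{prop4.2}). So the honest statement is: either invoke the Lipschitz setting of \cite{fuj}, where both lines of (\ref{3.16}) follow by your Gronwall argument, or weaken the second line; the Bihari step as you wrote it does not deliver the bound claimed.
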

\subsection{Existence and uniqueness for BSDE with jumps}
Let $(t,x)\in[0,T]\times\mathbb{R}^{d}$  and we consider the following m-dimensional BSDE with jumps:
\begin{equation}\label{3.17}
\left
\{\begin{array}{ll}
(i)~\vec{Y}^{t,x}:=(Y^{i,t,x})_{i=1,m}\in\mathcal{S}^{2}(\mathbb{R}^{m}),~Z^{t,x}:=(Z^{i,t,x})_{i=1,m}\in\mathbb{H}^{2}(\mathbb{R}^{m\times d}),~ U^{t,x}:=(U^{i,t,x})_{i=1,m}\in\mathbb{H}^{2}(\mathbb{L}^{2}_{m}(\lambda));\\
(ii)~dY^{i;t,x}_{s}=-f^{(i)}(s,X^{t,x}_{s},Y^{i;t,x}_{s},Z^{i;t,x}_{s},U^{i;t,x}_{s})ds+Z^{i;t,x}_{s}\mathrm{d}
\mathrm{B}_{s}+\displaystyle\int_{\mathrm{E}}\mathrm{U}^{i;t,x}
_{s}(e)\tilde{\mu}(\mathrm{d}s,\mathrm{d}e),\quad s\leq T;\\ 
(iii)~Y^{i;t,x}_{T}=g^{i}(X^{t,x}_{T});
\end{array}
\right.
\end{equation}
where for any $i\in\{1,\ldots,m\}$, $Y^{i;t,x}_{s}$ is the ith row of $Y^{t,x}_{s}$, $Z^{i;t,x}_{s}$ is the ith component of $Z^{t,x}_{s}$ and $U^{i;t,x}_{s}$ is the ith component of $U^{t,x}_{s}$.\\
\begin{proposition}\label{prop3.2}
Assume that assumptions $(\mathbf{H1})\textrm{ and }(\mathbf{H2})$ hold. Then for any $(t,x)\in[0,T]\times\mathbb{R}^{d}$, the BSDE (\ref{3.17}) has an unique solution $(\vec{Y}^{t,x},Z^{t,x},U^{t,x})$.
\end{proposition}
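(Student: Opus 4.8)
The plan is to reduce (\ref{3.17}) to a system of classical Lipschitz backward equations with jumps and then apply, or re-prove by a contraction argument, the standard existence and uniqueness theorem. First I would note that the system (\ref{3.17}) is \emph{decoupled}: for each $i$ the $i$-th equation involves the remaining components of $(\vec Y^{t,x},Z^{t,x},U^{t,x})$ neither in its generator $f^{(i)}(s,X^{t,x}_{s},Y^{i;t,x}_{s},Z^{i;t,x}_{s},U^{i;t,x}_{s})$ nor in its terminal value $g^{i}(X^{t,x}_{T})$. Hence it is enough to solve, for each fixed $i\in\{1,\dots,m\}$, the one-dimensional BSDE with jumps driven by $(B,\tilde\mu)$ with data $\big(g^{i}(X^{t,x}_{T}),f^{(i)}\big)$, and then to stack the $m$ solutions.

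Second, I would verify that, for each $i$, these data fall under the hypotheses of the classical theorem. From (\textbf{H1}), $g^{i}\in\mathcal{M}$; since the associated modulus $\rho$ is concave with $\rho(0)=0$ it is sublinear, $\rho(u)\le\rho(1)(1+u)$, so the Mao-type continuity yields $|g^{i}(x)|\le|g^{i}(0)|+C(1+|x|)$, i.e. at most linear growth. Combined with the moment bound $\mathbb{E}[\sup_{r\le T}|X^{t,x}_{r}|^{p}]<\infty$ of Proposition \ref{pro3.1}, this gives $\mathbb{E}[|g^{i}(X^{t,x}_{T})|^{2}]<\infty$. By (\ref{2.15})(a) the map $(y,z,\zeta)\mapsto f^{(i)}(s,x,y,z,\zeta)$ is globally Lipschitz uniformly in $(s,x)$, and by (\ref{2.15})(b), $\mathbb{E}[\int_{0}^{T}|f^{(i)}(r,X^{t,x}_{r},0,0,0)|^{2}\,dr]<\infty$. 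So each equation is a Lipschitz BSDE with jumps with square-integrable data, and the standard result (see, e.g., \cite{bar}) applies.

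Third, for completeness I would sketch the fixed point. On $\mathcal{H}^{2}:=\mathcal{S}^{2}(\mathbb{R})\times\mathbb{H}^{2}(\mathbb{R}^{d})\times\mathbb{H}^{2}(\mathbb{L}^{2}(\lambda))$, equipped for $\beta>0$ with the equivalent norm $\|(Y,Z,U)\|^{2}_{\beta}:=\mathbb{E}\big[\int_{0}^{T}e^{\beta s}\big(|Y_{s}|^{2}+|Z_{s}|^{2}+\|U_{s}\|^{2}_{\mathbb{L}^{2}(\lambda)}\big)\,ds\big]$, define $\Phi(\Theta)=(\bar Y,\bar Z,\bar U)$ for $\Theta=(Y,Z,U)\in\mathcal{H}^{2}$ by letting $(\bar Z,\bar U)$ come from the martingale representation theorem for $(B,\tilde\mu)$ applied to the square-integrable martingale $s\mapsto\mathbb{E}\big[g^{i}(X^{t,x}_{T})+\int_{0}^{T}f^{(i)}(r,X^{t,x}_{r},Y_{r},Z_{r},U_{r})\,dr\mid\mathcal{F}_{s}\big]$, and $\bar Y_{s}:=\mathbb{E}\big[g^{i}(X^{t,x}_{T})+\int_{s}^{T}f^{(i)}(r,X^{t,x}_{r},Y_{r},Z_{r},U_{r})\,dr\mid\mathcal{F}_{s}\big]$. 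Using the integrability above, Doob's inequality and the It\^o isometry one gets $\Phi(\mathcal{H}^{2})\subset\mathcal{H}^{2}$; then, writing $(\bar Y^{j},\bar Z^{j},\bar U^{j})=\Phi(\Theta^{j})$ for two inputs $\Theta^{1},\Theta^{2}$ and applying It\^o's formula to $s\mapsto e^{\beta s}|\bar Y^{1}_{s}-\bar Y^{2}_{s}|^{2}$, together with (\ref{2.15})(a) and $ab\le\varepsilon a^{2}+\varepsilon^{-1}b^{2}$, one obtains $\|\Phi(\Theta^{1})-\Phi(\Theta^{2})\|_{\beta}\le c(\beta)\|\Theta^{1}-\Theta^{2}\|_{\beta}$ with $c(\beta)<1$ for $\beta$ large; the unique fixed point solves the equation, and uniqueness of the solution of (\ref{3.17}) follows from the same a priori estimate.

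The point to watch is that $\lambda$ is infinite, $\lambda(E)=\infty$: the jump terms must be understood against the compensated measure $\tilde\mu$, and one must know that the martingale representation theorem for $(B,\tilde\mu)$ and the isometry $\mathbb{E}[|\int_{0}^{t}\int_{E}U_{r}(e)\tilde\mu(dr,de)|^{2}]=\mathbb{E}[\int_{0}^{t}\int_{E}|U_{r}(e)|^{2}\lambda(de)\,dr]$ still hold here. Since $\lambda$ is $\sigma$-finite and integrates $1\wedge|e|^{2}$, since $\gamma^{i}$ obeys (\ref{2.12})(i) (so $\gamma^{i}(t,x,\cdot)\in\mathbb{L}^{2}(\lambda)$ and the integral defining $f^{(i)}$ from $h^{(i)}$ is well posed), and since $f^{(i)}$ is Lipschitz exactly in the $\mathbb{L}^{2}(\lambda)$-norm --- which is what makes the contraction close --- the infiniteness of $\lambda$ creates no genuine obstruction. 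I expect the only real bookkeeping to be the passage from the Mao modulus to the linear growth of $g^{i}$ and the check that $\Phi$ maps into $\mathcal{S}^{2}$ via Doob's inequality.
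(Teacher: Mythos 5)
Your argument is correct and is exactly the route the paper relies on: the paper offers no proof of Proposition \ref{prop3.2} at all, simply referring to Barles--Buckdahn--Pardoux \cite{bar}, and what you write (check square-integrability of $g^{i}(X^{t,x}_{T})$ and of $f^{(i)}(\cdot,X^{t,x}_{\cdot},0,0,0)$, uniform Lipschitz continuity of $f^{(i)}$ in $(y,z,\zeta)$ with the $\mathbb{L}^{2}(\lambda)$-norm, then martingale representation for $(B,\tilde\mu)$ plus a contraction in the $e^{\beta s}$-weighted norm) is precisely the standard proof given there; your remarks on $\lambda(E)=\infty$ being harmless are also right. One caveat: your opening claim that the system is decoupled is not what the paper intends --- $h^{(i)}$ is defined on $[0,T]\times\mathbb{R}^{k+m+d+1}$ with $y\in\mathbb{R}^{m}$, and in the proof of Proposition \ref{prop4.2} the generator of the $i$-th equation is evaluated at the full vector $\vec{Y}^{t,x}_{r}$, so the equations are coupled through the $y$-variable (the notation $Y^{i;t,x}_{s}$ in (\ref{3.17}) is just sloppy). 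This does not damage your proof: since the coupling is only through $y$ and is Lipschitz, you simply run the same Picard/contraction argument on the $m$-dimensional triple $(\vec Y,Z,U)$ in $\mathcal{S}^{2}(\mathbb{R}^{m})\times\mathbb{H}^{2}(\mathbb{R}^{m\times d})\times\mathbb{H}^{2}(\mathbb{L}^{2}_{m}(\lambda))$ instead of componentwise, and every estimate you wrote goes through verbatim.
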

For proof of this proposition we can see \cite{bar}.
\subsection{Viscosity solutions of integro-differential partial equation}
\begin{proposition}(see, \cite{bar})\label{prop3.3}
Assume that (\textbf{H1}), (\textbf{H2}), are fulfilled. Then there exists deterministic continuous functions $(u^{i}(t,x))_{i=1,m}$ which belong to $\Pi_{g}$ such that for any $(t,x)\in[0,T]\times\mathbb{R}^{k}$, the solution of the BSDE (\ref{3.17}) verifies:
\begin{equation}\label{3.18}
\forall i\in\{1,\ldots,m\},~\forall s\in[t,T],~Y^{i;t,x}_{s}=u^{i}(s,X^{t,x}_{s}).
\end{equation} 
Moreover if for any $i\in\{1,\ldots,m\}$,
\begin{eqnarray*} 
& (i) & \gamma^{i}\geq 0;\\
& (ii) & \textrm{for any fixed}~(t,x,\vec{y},z)\in[0,T]\times \mathbb{R}^{k+m+d},~\textrm{the mapping}\\ 
& {}{} & (q\in\mathbb{R})\longmapsto h^{(i)}(t,x,\vec{y},z,q)\in\mathbb{R}~\textrm{is non-decreasing}. 
\end{eqnarray*}
\end{proposition}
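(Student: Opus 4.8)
\proof
Under (i) and (ii) the functions $(u^{i})_{i=1,m}$ form a viscosity solution of \eqref{eq1}, and the plan to prove it is the classical Markovian-BSDE scheme of \cite{bar}: first establish the Feynman--Kac representation \eqref{3.18}, then deduce the viscosity property by a contradiction argument based on the comparison theorem for BSDEs with jumps, which is where (i)--(ii) enter.

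First, for the representation, fix $i$ and set $u^{i}(t,x):=Y^{i;t,x}_{t}$. Since the solution of \eqref{3.17} on $[t,T]$ is built from the increments of $B$ and the Poisson mass posterior to $t$, the variable $Y^{i;t,x}_{t}$ is independent of $\mathcal{F}_{t}$; being also $\mathcal{F}_{t}$-measurable it is $\mathbb{P}$-a.s. constant, so $u^{i}$ is a well-defined deterministic function. That $u^{i}\in\Pi_{g}$ and $u^{i}$ is continuous will follow from the a priori $L^{2}$-estimates for Lipschitz BSDEs with jumps (used with \eqref{2.15}), together with the moment and stability bounds \eqref{3.16} of Proposition \ref{pro3.1} and the Mao condition $\mathcal{M}$ satisfied by $g^{i}$ and by $(t,x)\mapsto h^{(i)}(t,x,\cdot,\cdot,\cdot)$. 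Then the flow property $X^{r,X^{t,x}_{r}}_{s}=X^{t,x}_{s}$ ($t\le r\le s$) combined with the uniqueness in Proposition \ref{prop3.2} gives that the restriction to $[s,T]$ of $(Y^{i;t,x},Z^{i;t,x},U^{i;t,x})$ equals $(Y^{i;s,X^{t,x}_{s}},Z^{i;s,X^{t,x}_{s}},U^{i;s,X^{t,x}_{s}})$, hence $Y^{i;t,x}_{s}=u^{i}(s,X^{t,x}_{s})$, i.e. \eqref{3.18}.

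Then, for the viscosity property, I would check that each $u^{i}$ is a subsolution of \eqref{eq1} (the supersolution case being symmetric, and $u^{i}(T,x)=g^{i}(x)$ immediate from \eqref{3.18}). Take $(t_{0},x_{0})\in[0,T)\times\mathbb{R}^{k}$ and $\varphi\in C^{1,2}$ such that $u^{i}-\varphi$ has a global maximum at $(t_{0},x_{0})$, equal to $0$ there (global is allowed since $u^{i}\in\Pi_{g}$), and suppose for contradiction that
\begin{align*}
&\partial_{t}\varphi(t_{0},x_{0})+b(t_{0},x_{0})^{\top}\mathrm{D}_{x}\varphi(t_{0},x_{0})+\frac{1}{2}\mathrm{Tr}\big(\sigma\sigma^{\top}(t_{0},x_{0})\mathrm{D}^{2}_{xx}\varphi(t_{0},x_{0})\big)+\mathrm{K}_{i}\varphi(t_{0},x_{0})\\
&\qquad\qquad+h^{(i)}\big(t_{0},x_{0},u^{i}(t_{0},x_{0}),(\sigma^{\top}\mathrm{D}_{x}\varphi)(t_{0},x_{0}),\mathrm{B}_{i}\varphi(t_{0},x_{0})\big)<0 .
\end{align*}
The operators $\mathrm{K}_{i}\varphi$ and $\mathrm{B}_{i}\varphi$ are finite and continuous because their integrands are $O(1\wedge|e|^{2})$ (second-order Taylor remainder of $\varphi$ for $\mathrm{K}_{i}$, and $\gamma^{i}$ times the increment of $\varphi$ for $\mathrm{B}_{i}$, using \eqref{2.7}(i), \eqref{2.12}(i)), which is $\lambda$-integrable; so by continuity the strict inequality persists with a fixed gap $\varepsilon>0$ on a cylinder $Q=[t_{0},t_{0}+\eta]\times B(x_{0},r)$ with $t_{0}+\eta<T$. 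Let $\tau=\inf\{s\ge t_{0}:X^{t_{0},x_{0}}_{s}\notin B(x_{0},r)\}\wedge(t_{0}+\eta)$, so $\tau>t_{0}$ a.s. By It\^o's formula (licit since $\varphi\in C^{1,2}$ and, by Proposition \ref{pro3.1}, $X^{t_{0},x_{0}}$ is a well-behaved rcll semimartingale), $(\varphi(s,X^{t_{0},x_{0}}_{s}))_{t_{0}\le s\le\tau}$ is the first component of the solution on $[t_{0},\tau]$ of the BSDE with terminal value $\varphi(\tau,X^{t_{0},x_{0}}_{\tau})$ and generator $\widetilde f(s)=-[\partial_{s}\varphi+b^{\top}\mathrm{D}_{x}\varphi+\frac{1}{2}\mathrm{Tr}(\sigma\sigma^{\top}\mathrm{D}^{2}_{xx}\varphi)+\mathrm{K}_{i}\varphi](s,X^{t_{0},x_{0}}_{s})$ (constant in $(y,z,\zeta)$), for which moreover $\int_{E}\gamma^{i}(s,X^{t_{0},x_{0}}_{s-},e)(\varphi(s,X^{t_{0},x_{0}}_{s-}+\beta)-\varphi(s,X^{t_{0},x_{0}}_{s-}))\lambda(de)=\mathrm{B}_{i}\varphi(s,X^{t_{0},x_{0}}_{s-})$ by \eqref{2.2}; while by \eqref{3.18}, $(u^{i}(s,X^{t_{0},x_{0}}_{s}))_{t_{0}\le s\le\tau}$ is the first component of the solution on $[t_{0},\tau]$ of the BSDE with generator $f^{(i)}$ and terminal value $u^{i}(\tau,X^{t_{0},x_{0}}_{\tau})\le\varphi(\tau,X^{t_{0},x_{0}}_{\tau})$. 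Read along the process $(\varphi(s,X^{t_{0},x_{0}}_{s}))_{s}$ on $Q$, the displayed inequality says exactly that $f^{(i)}$ evaluated at that process is $\le\widetilde f(s)-\varepsilon$ on $[t_{0},\tau]$. Since $\gamma^{i}\ge0$ and $q\mapsto h^{(i)}(t,x,\vec{y},z,q)$ is non-decreasing, the map $\zeta\mapsto f^{(i)}(t,x,y,z,\zeta)=h^{(i)}(t,x,y,z,\int_{E}\gamma^{i}\zeta\,\lambda(de))$ satisfies the Royer-type monotonicity condition required for comparison of BSDEs with jumps (for $\zeta_{1}\ge\zeta_{2}$, $f^{(i)}(\cdot,\zeta_{1})-f^{(i)}(\cdot,\zeta_{2})=\int_{E}\theta\,\gamma^{i}(\zeta_{1}-\zeta_{2})\lambda(de)$ with $0\le\theta\le C$ and $\theta\gamma^{i}\ge0>-1$); hence the strict comparison theorem yields $u^{i}(t_{0},x_{0})<\varphi(t_{0},x_{0})$, contradicting $u^{i}(t_{0},x_{0})=\varphi(t_{0},x_{0})$. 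This gives the subsolution inequality, and therefore $(u^{i})_{i=1,m}$ is a viscosity solution of \eqref{eq1}.

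The hard part is this last step: the comparison argument is precisely where (i) $\gamma^{i}\ge0$ and (ii) the monotonicity of $h^{(i)}$ in $q$ are indispensable — without them $f^{(i)}$ need not satisfy the condition guaranteeing comparison for jump-BSDEs and the contradiction cannot be produced. A pervasive secondary difficulty is that $\lambda(E)=\infty$, so one must keep using the integrability of $1\wedge|e|^{2}$ against $\lambda$ and the bounds \eqref{2.7}(i), \eqref{2.12}(i) on $\beta$ and $\gamma^{i}$ to control $\mathrm{K}_{i}\varphi$, $\mathrm{B}_{i}\varphi$, and the $U$-components of the BSDEs; the regularity and growth claims of the first step, though routine, likewise rest essentially on $g^{i},h^{(i)}\in\mathcal{M}$ and on \eqref{3.16}.
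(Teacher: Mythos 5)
The paper itself gives no proof of Proposition \ref{prop3.3}: it is recalled from Barles, Buckdahn and Pardoux \cite{bar}, and your sketch follows exactly that classical route (a deterministic $u^{i}(t,x):=Y^{i;t,x}_{t}$ by the independence argument, the flow property plus uniqueness of Proposition \ref{prop3.2} to get \eqref{3.18}, then the contradiction argument on a stopped cylinder combined with the comparison theorem for BSDEs with jumps, whose Royer-type condition is precisely where $\gamma^{i}\geq 0$ and the monotonicity of $h^{(i)}$ in $q$ enter), so it is correct and consistent with the cited source. The one point to flag is that under the present Mao-type hypotheses on $g^{i}$, $h^{(i)}$, $b$, $\sigma$, $\beta$ (rather than Lipschitz continuity in $x$ as in \cite{bar}) the continuity of $u^{i}$ is not a black-box consequence of standard $L^{2}$-stability estimates but requires the Bihari-inequality argument that the paper develops later (Proposition \ref{prop4.2}), which your first step only gestures at.
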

The function $(u^{i})_{i=1,m}$ is a continuous viscosity solution (in Barles and al. 's sense, see Definition \ref{def5.3} in the Appendix) of (\ref{eq1}). The solution $(u^{i})_{i=1,m}$ of (\ref{eq1}) is unique in the class $\Pi^{c}_{g}$.
\begin{remark}(see, \cite{bar})
Under the assumptions (\textbf{H1}), (\textbf{H2}), there exists a unique viscosity solution of (\ref{eq1}) in the class of functions satisfying
\begin{equation}
\lim\limits_{\left|x\right| \to +\infty}\left|u(t,x)\right| e^{-\tilde{A}[\log(\left|x\right|)]^{2}}=0
\end{equation} 
uniformly for $t\in[0,T]$, for some $\tilde{A}>0$.
\end{remark}
\section{Estimates and properties}
In this section, we will establish a priori estimates concerning solutions of BSDE (\ref{3.17}), which will play an important role in the proof of our main results.  
\begin{lemma}(see, \cite{hama})\label{lem4.1}
Under assumption (\textbf{H1}), (\textbf{H2}), for any $p\geq 2$ there exists two non-negative constants $C$ and $\delta$ such that,
\begin{equation}\label{4.22}
\mathbb{E}\left[\left\lbrace \displaystyle\int^{T}_{0} ds \left(\displaystyle\int_{E}\left|U^{t,x}_{s}(e)\right|^{2}\lambda(de)\right)\right\rbrace^{\frac{p}{2}}\right]=\mathbb{E}\left[\left\lbrace \displaystyle\int^{T}_{0} ds\|U^{t,x}_{s}\|^{2}_{\mathbb{L}^{2}_{m}(\lambda)}\right\rbrace^{\frac{p}{2}}\right]\leq C\left(1+\left|x\right|^{\delta}\right). 
\end{equation} 
\end{lemma}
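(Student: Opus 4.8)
The statement to prove is the a priori estimate \eqref{4.22} bounding the $L^{p/2}$-norm of the quadratic variation of the jump part $U^{t,x}$ of the BSDE \eqref{3.17} by $C(1+|x|^{\delta})$. The natural strategy is to start from the BSDE itself, apply It\^o's formula to $|Y^{i;t,x}_{s}|^{2}$ (or more precisely to a suitable power, but quadratic suffices here since we want $p/2$ powers of the bracket), and read off the Brownian and jump brackets from the resulting identity. Concretely, writing the BSDE between $0$ and $T$ and using $Y^{i}_{T}=g^{i}(X_{T})$, It\^o's formula gives an identity of the form
\begin{equation*}
|Y^{i}_{0}|^{2}+\int_{0}^{T}|Z^{i}_{s}|^{2}\,ds+\int_{0}^{T}\!\!\int_{E}|U^{i}_{s}(e)|^{2}\lambda(de)\,ds
= |g^{i}(X_{T})|^{2}+2\int_{0}^{T}Y^{i}_{s}f^{(i)}(s,X_{s},Y^{i}_{s},Z^{i}_{s},U^{i}_{s})\,ds - (\text{martingale terms}).
\end{equation*}
First I would raise this to the power $p/2$, take expectations, and use the elementary inequality $(a+b+c)^{p/2}\le C_{p}(a^{p/2}+b^{p/2}+c^{p/2})$ together with the Burkholder--Davis--Gundy inequality to control the martingale terms (the stochastic integrals against $B$ and against $\tilde\mu$) by a small multiple of $\mathbb E[(\int_{0}^{T}|Z^{i}_{s}|^{2}ds)^{p/2}]+\mathbb E[(\int_{0}^{T}\|U^{i}_{s}\|^{2}_{\mathbb L^{2}(\lambda)}ds)^{p/2}]$, which can then be absorbed into the left-hand side.

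The remaining input is to bound the two genuinely data-dependent terms on the right. For $\mathbb E[|g^{i}(X_{T})|^{p}]$ one uses that $g^{i}\in\mathcal M$, hence has polynomial growth (the Mao modulus $\rho$ gives $|g^{i}(x)|\le |g^{i}(0)|+\rho^{1/p}(|x|^{p})$, and concavity of $\rho$ forces at most linear-in-$|x|^{p}$ growth), combined with the moment estimate $\mathbb E[\sup_{r\le T}|X^{t,x}_{r}|^{p}]\le C(1+|x|^{p})$ from Proposition~\ref{pro3.1}. For the driver term, the Lipschitz property \eqref{2.15}(a) gives $|f^{(i)}(s,X_{s},Y^{i}_{s},Z^{i}_{s},U^{i}_{s})|\le |f^{(i)}(s,X_{s},0,0,0)|+C(|Y^{i}_{s}|+|Z^{i}_{s}|+\|U^{i}_{s}\|_{\mathbb L^{2}(\lambda)})$, and the free term has finite $L^{2}$-moment with polynomial bound by \eqref{2.15}(b). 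Then $2\int Y^{i}_{s}f^{(i)}\,ds$ is handled by Young's inequality: the cross terms $Y\cdot Z$ and $Y\cdot\|U\|$ produce, after $\varepsilon$-Young, a controllable $\mathbb E[\sup_{s}|Y^{i}_{s}|^{p}]$ plus small multiples of the $Z$- and $U$-brackets (again absorbed left), while $\mathbb E[\sup_{s}|Y^{i}_{s}|^{p}]\le C(1+|x|^{\delta'})$ follows from the representation $Y^{i;t,x}_{s}=u^{i}(s,X^{t,x}_{s})$ of Proposition~\ref{prop3.3} with $u^{i}\in\Pi_{g}$, or alternatively from a standard $\mathcal S^{2}$-type a priori estimate for the BSDE. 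Collecting constants gives \eqref{4.22} with $\delta$ the maximum of the polynomial exponents appearing, and $C$ absorbing $M_{p}$, the Lipschitz and BDG constants, and $T$.

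The main obstacle is the absorption argument: one must be careful that the coefficient multiplying $\mathbb E[(\int_{0}^{T}\|U^{i}_{s}\|^{2}_{\mathbb L^{2}(\lambda)}ds)^{p/2}]$ on the right, coming jointly from the BDG estimate of the $\tilde\mu$-stochastic integral and from the Young splitting of the driver cross term, is strictly less than $1$ (choosing the BDG-constant-dependent threshold and the Young parameter $\varepsilon$ small enough), so that it can be moved to the left-hand side; and simultaneously that the $Z$-bracket introduced in the process is likewise absorbed, which requires treating the $Z$- and $U$-brackets together and choosing the splitting parameters jointly. A secondary point requiring care is that, since $\lambda(E)=\infty$, all manipulations with the compensated measure $\tilde\mu$ and the bracket $\int_{E}|U_{s}(e)|^{2}\lambda(de)$ must stay within the space $\mathbb H^{2}(\mathbb L^{2}_{m}(\lambda))$ where the solution lives by Proposition~\ref{prop3.2}, so the stochastic integral against $\tilde\mu$ is a genuine square-integrable martingale and BDG applies; no finiteness of $\lambda$ is actually needed. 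Everything else is a routine chaining of the moment bound of Proposition~\ref{pro3.1}, the polynomial growth coming from $\mathcal M$, and standard BSDE estimates.
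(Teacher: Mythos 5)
The paper itself does not prove Lemma \ref{lem4.1}; it simply cites \cite{hama}, and your overall scheme (It\^o's formula for $|Y|^{2}$, power $p/2$, BDG and absorption, polynomial growth of $g$ and of $f^{(i)}(\cdot,\cdot,0,0,0)$ via $\mathcal{M}$, moment bounds on $X^{t,x}$ from Proposition \ref{pro3.1}, and $\mathbb{E}[\sup_{s}|Y_{s}|^{p}]\le C(1+|x|^{\delta'})$ via the representation $Y^{i;t,x}_{s}=u^{i}(s,X^{t,x}_{s})$ with $u^{i}\in\Pi_{g}$) is indeed the standard route taken in the cited reference. So the strategy is the right one.

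There is, however, one genuine gap. It\^o's formula applied to $|Y^{i}_{s}|^{2}$ does not produce the term $\int_{0}^{T}\!\int_{E}|U^{i}_{s}(e)|^{2}\lambda(de)\,ds$ on the left-hand side, as written in your identity; it produces the sum of squared jumps, i.e.\ the optional bracket $\int_{0}^{T}\!\int_{E}|U^{i}_{s}(e)|^{2}\mu(ds,de)$. For $p=2$ the two have the same expectation and your argument closes, but the lemma is stated for all $p\ge 2$, and after raising to the power $p/2$ the optional and predictable brackets are no longer interchangeable for free. Your absorption scheme then does not close as described: the quantity you want to estimate (and the small error terms you propose to absorb, which come from the driver through $\|U_{s}\|_{\mathbb{L}^{2}(\lambda)}$) involve the predictable bracket, while the It\^o identity controls the optional one. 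You need a bridging step, e.g.\ the Lenglart--L\'epingle--Pratelli domination inequality $\mathbb{E}[\tilde{A}_{T}^{q}]\le q^{q}\,\mathbb{E}[A_{T}^{q}]$, $q\ge 1$, applied to $A_{T}=\int_{0}^{T}\!\int_{E}|U_{s}(e)|^{2}\mu(ds,de)$ with dual predictable projection $\tilde{A}_{T}=\int_{0}^{T}\|U_{s}\|^{2}_{\mathbb{L}^{2}_{m}(\lambda)}ds$ (this is precisely reference \cite{len} in the bibliography, used in the proof in \cite{hama}); alternatively one can apply BDG to the martingale $\int\!\int|U|^{2}\tilde{\mu}$, using that $|U_{s}(e)|\le 2\sup_{r}|Y_{r}|$ at jump times, to compare the two brackets. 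Without some such step the passage from the It\^o identity to the estimate \eqref{4.22} for $p>2$ is not justified; with it, the rest of your plan (choice of $\varepsilon$, joint absorption of the $Z$- and $U$-brackets, and the polynomial-growth bounds) goes through as you describe.
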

\begin{proposition}\label{prop4.2}
For any $i=1,\ldots,m$, there exist $C\geq 0$, $\kappa>0$ such that, $\forall$ $x$ and $x'$ elements of $\mathbb{R}^{k}$ $$|u^{i}(t,x)-u^{i}(t,x')|^{2}\leq \rho\left(M_{2}\left|x-x'\right|^{2}(1+\left|x-x'\right|^{2})\right)\left[C(1+\left|x\right|^{\kappa})\right]$$
\end{proposition}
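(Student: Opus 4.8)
The plan is to exploit the representation $Y^{i;t,x}_s = u^i(s,X^{t,x}_s)$ from Proposition~\ref{prop3.3}, evaluated at $s=t$, which gives $u^i(t,x)=Y^{i;t,x}_t$ (deterministic). Thus $|u^i(t,x)-u^i(t,x')|^2 = |Y^{i;t,x}_t - Y^{i;t,x'}_t|^2$, and one is reduced to estimating the difference of solutions of the BSDE~(\ref{3.17}) driven by the two forward diffusions started at $x$ and $x'$. First I would write $\delta Y = Y^{i;t,x}-Y^{i;t,x'}$, $\delta Z = Z^{i;t,x}-Z^{i;t,x'}$, $\delta U = U^{i;t,x}-U^{i;t,x'}$, subtract the two backward equations, and apply Itô's formula to $|\delta Y_s|^2$ (or, since we ultimately want the value at a single time, simply take expectations of the integrated form). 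The terminal term produces $\mathbb{E}[|g^i(X^{t,x}_T)-g^i(X^{t,x'}_T)|^2]$, and the driver term produces the increments of $f^{(i)}$.

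The key point is to split the driver increment $f^{(i)}(s,X^{t,x}_s,Y^{i;t,x}_s,\dots)-f^{(i)}(s,X^{t,x'}_s,Y^{i;t,x'}_s,\dots)$ into the spatial part — controlled by the Mao condition since $h^{(i)}(\cdot,\cdot,y,z,q)\in\mathcal{M}$ and $\gamma^i\in\mathcal{M}$, so $f^{(i)}$ inherits a modulus $\rho^{1/p}(|x-x'|^p)$ in its $x$-variable (with $p=2$) — and the part in $(y,z,\zeta)$, which is handled by the uniform Lipschitz property~(\ref{2.15})(a). The Lipschitz part in $\delta Z$ and $\delta U$ is absorbed by the martingale/Itô quadratic terms in the standard way (Young's inequality with a small parameter), and the part in $\delta Y$ is absorbed by Gronwall's lemma. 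What survives is a bound of the form
\begin{equation*}
|u^i(t,x)-u^i(t,x')|^2 \le C\,\mathbb{E}\Big[|g^i(X^{t,x}_T)-g^i(X^{t,x'}_T)|^2\Big] + C\,\mathbb{E}\Big[\int_t^T \rho\big(|X^{t,x}_s-X^{t,x'}_s|^2\big)\,ds\Big].
\end{equation*}

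Now I would use that $g^i\in\mathcal{M}$ (assumption~(\textbf{H1})), so $|g^i(X^{t,x}_T)-g^i(X^{t,x'}_T)|^2\le \rho(|X^{t,x}_T-X^{t,x'}_T|^2)$, and then invoke the flow estimate in Proposition~\ref{pro3.1}: $\mathbb{E}[\sup_{t\le r\le s}|X^{t,x}_r-X^{t,x'}_r|^2]\le |x-x'|^2 + M_2(s-t)|x-x'|^2 \le M_2|x-x'|^2(1+|x-x'|^2)$ after crude bounding. Since $\rho$ is concave with $\rho(0)=0$, Jensen's inequality moves the expectation inside: $\mathbb{E}[\rho(|X^{t,x}_r-X^{t,x'}_r|^2)]\le \rho(\mathbb{E}|X^{t,x}_r-X^{t,x'}_r|^2)\le \rho(M_2|x-x'|^2(1+|x-x'|^2))$. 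Combining gives exactly the claimed inequality, with the polynomial factor $C(1+|x|^\kappa)$ arising from the constants generated by the Lipschitz/Gronwall step together with the polynomial-growth a priori bounds on $(Y,Z,U)$ (Lemma~\ref{lem4.1} and the growth of $f^{(i)}(\cdot,\cdot,0,0,0)\in\Pi^c_g$), which enter whenever one controls cross terms by Young's inequality.

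The main obstacle I anticipate is bookkeeping the concavity correctly through the estimate: one must keep the $\rho$-modulus intact rather than crudely Lipschitz-bounding it, which means the small-parameter splittings in the Itô step have to be arranged so that the $x$-dependence of the driver is \emph{not} merged with the Lipschitz-in-$(y,z,\zeta)$ terms before Gronwall is applied — otherwise the final bound loses the $\rho$ structure needed later for the uniqueness argument via $\int_{0^+}du/\rho(u)=+\infty$. A secondary technical nuisance is justifying Jensen's inequality for $\rho$ applied to the running supremum (or handling the $ds$-integral and the sup separately), and tracking precisely how the polynomial weight $(1+|x|^\kappa)$ picks up its exponent $\kappa$ from the repeated use of the $p$-th moment estimates; but these are routine once the structure above is in place.
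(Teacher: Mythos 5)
Your proposal follows essentially the same route as the paper's proof: apply It\^o's formula to $|\Delta Y|^{2}$ for the two BSDE solutions driven by $X^{t,x}$ and $X^{t,x'}$, handle the terminal term with the Mao condition on $g^{i}$, Jensen's inequality for the concave $\rho$ and the flow estimate (\ref{3.16}), split the driver increment into its $x$-part and its Lipschitz $(y,z,\zeta)$-part, absorb via Young and Gronwall, and conclude at $s=t$ through the representation (\ref{3.18}). The only wording caveat is that $f^{(i)}$ does not inherit a uniform $\rho$-modulus in $x$: the $x$-increment entering through $\gamma^{i}$ is weighted by $\|U^{t,x'}_{r}\|_{\mathbb{L}^{2}(\lambda)}$, and it is exactly this cross term, controlled by Lemma \ref{lem4.1}, that produces the factor $C(1+|x|^{\kappa})$ — which is in substance what you indicate when you invoke that lemma.
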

\begin{proof}
Let $x$ and $x'$ be elements of $\mathbb{R}^{k}$. Let $(\vec{Y}^{t,x},Z^{t,x},U^{t,x})$ $(\textrm{resp. }(\vec{Y}^{t,x'},Z^{t,x'},U^{t,x'}))$ be the solution of the BSDE with jumps (\ref{3.17}) associated with ($f(s,X^{t,x}_{s},y,\eta,\zeta),g(X^{t,x}_{T})$)\\
$(\textrm{resp. }f(s,X^{t,x'}_{s},y,\eta,\zeta),g(X^{t,x'}_{T}))$. Applying It\^o formula to $\left|\vec{Y}^{t,x}-\vec{Y}^{t,x'}\right|^{2}$ between $s$ and $T$, we have
\newpage
\begin{eqnarray}\label{4.31}
& {}{} &\left|\vec{Y}^{t,x}_{s}-\vec{Y}^{t,x'}_{s}\right|^{2}+\displaystyle\int^{T}_{s}\left|\Delta Z_{r}\right|^{2}\,dr+\sum_{s\leq r\leq T}(\Delta_{r}\vec{Y}^{t,x}_{r})^{2}\\
& {}{} &=\left|g(X^{t,x}_{T})-g(X^{t,x'}_{T})\right|^{2}+2\displaystyle\int^{T}_{s}
<\left(\vec{Y}^{t,x}_{s}-\vec{Y}^{t,x'}_{s}\right),\Delta f(r)>\,dr\nonumber\\
& {}{} &\quad\quad-2\displaystyle\int^{T}_{s}
\displaystyle\int_{\mathrm{E}}\left(\vec{Y}^{t,x}_{r}
-\vec{Y}^{t,x'}_{r}\right)\left(\Delta U_{r}(e)\right)\tilde{\mu}(\mathrm{d}r,\mathrm{d}e)-2\displaystyle\int^{T}_{s} \left(\vec{Y}^{t,x}_{r}-\vec{Y}^{t,x'}_{r}\right)\left(\Delta Z_{r}\right)\,dB_{r};\nonumber
\end{eqnarray}
and taking expectation we obtain: $\forall s\in[t,T]$,
\begin{eqnarray}\label{4.33}
& {}{} &\mathbb{E}\left[\left|\vec{Y}^{t,x}_{s}-\vec{Y}^{t,x'}_{s}\right|^{2}+\displaystyle\int^{T}_{s}\left|\Delta Z_{r}\right|^{2}\,dr+\displaystyle\int^{T}_{s}\|\Delta U_{r}\|^{2}_{\mathbb{L}^{2}(\lambda)}\,dr\right]\\
& {}{} &\leq\mathbb{E}\left[\left|g(X^{t,x}_{T})-g(X^{t,x'}_{T})\right|^{2}+2\displaystyle\int^{T}_{s}
<\left(\vec{Y}^{t,x}_{s}-\vec{Y}^{t,x'}_{s}\right),\Delta f(r)>\,dr\right]\nonumber
\end{eqnarray}
where the processes $\Delta X_{r}$, $\Delta Y_{r}$, $\Delta f(r)$, $\Delta Z_{r}$ and $\Delta U_{r}$  are defined as follows: $\forall r\in[t,T]$,\\
$\Delta f(r):=((\Delta f^{(i)}(r))_{i=1,m}=(f^{(i)}(r,X^{i;t,x}_{r},\vec{Y}^{t,x}_{r},Z^{i;t,x}_{r},U^{i;t,x}_{r})-f^{(i)}(r,X^{i;t,x'}_{r},\vec{Y}^{i;t,x'}_{r},Z^{i;t,x'}_{r},U^{i;t,x'}_{r}))_{i=1,m}$,
$\Delta X_{r}=X^{t,x}_{r}-X^{t,x'}_{r}$, $\Delta Y(r)=\vec{Y}^{t,x}_{r}-\vec{Y}^{t,x'}_{r}=(Y^{j;t,x}_{r}-Y^{j;t,x'}_{r})_{j=1,m}$,\\
$\Delta Z_{r}=Z^{t,x}_{r}-Z^{t,x'}_{r}$ and $\Delta U_{r}=U^{t,x}_{r}-U^{t,x'}_{r}$
($<\cdot,\cdot>$ is the usual scalar product on $\mathbb{R}^{m}$).
Now we will give an estimation of each three terms of the second member of inequality (\ref{4.33}).\\
$\bullet$ As for any $i\in\{1,\ldots,m\}$ $g^{i}$ belongs to $\mathcal{M}~(\textrm{ for }p=2)$; therefore\\
\begin{eqnarray}
\mathbb{E}\left[\left|g(X^{t,x}_{T})-g(X^{t,x'}_{T})\right|^{2}\right] & \leq & \mathbb{E}\left[\rho\left(\left|X^{t,x}_{T}-X^{t,x'}_{T}\right|^{2}\right)\right]\nonumber\\
& \leq & \rho\left(\mathbb{E}\left[\left|X^{t,x}_{T}-X^{t,x'}_{T}\right|^{2}\right]\right)\left(\textrm{by Jensen's inequality}\right)\nonumber\\
& \leq & \rho\left(\mathbb{E}\left[\left|X^{t,x}_{T}-X^{t,x'}_{T}-(x-x')^{2}+(x-x')^{2}\right|^{2}\right]\right)\nonumber
\end{eqnarray}
and by subsequently using the triangle inequality, the relation (\ref{3.16}) of proposition \ref{pro3.1} and the fact that
 $$(a+b){^p}\leq 2^{p-1}(a^{p}+b^{p}).$$
\begin{equation}\label{4.34}
\mathbb{E}\left[\left|g(X^{t,x}_{T})-g(X^{t,x'}_{T})\right|^{2}\right]\leq \rho\left(M_{2}\left|x-x'\right|^{2}(1+\left|x-x'\right|^{2})\right),
\end{equation} 
%$\bullet$ using (iii) of (\ref{3.17}): $\mathbb{E}\left[2\displaystyle\int^{T}_{s}\left(\vec{Y}^{t,x}_{r}
%-\vec{Y}^{t,x'}_{r}\right)\,
%d\left(\Delta K_{r}\right)\right]$ can be replaced by \\
%$$\mathbb{E}\left[2\displaystyle\int^{T}_{s}\left(\ell(r,X^{t,x}_{r}
%)-\ell(r,X^{t,x'}_{r})\right)\,
%d\left(\Delta K_{r}\right)\right].$$\\
%Now by  (\textbf{H1}) and Cauchy-Schwartz inequality we obtain:
%\begin{equation}\label{4.35}
%\mathbb{E}\left[\sup_{0\leq t\leq T}(\Delta\ell_{t})^{2}\right]\times\mathbb{E}\left[\left(\Delta K_{T}\right)^{2}\right]\leq 2CC'\left|x-x'\right|^{2}(1+\left|x\right|^{2p}+\left|x'\right|^{2p});
%\end{equation}
%where $C'=\mathbb{E}\left[\left(\Delta K_{T}\right)^{2}\right]$.\\  
$\bullet$ To complete our estimation of (\ref{4.33}) we need to deal with $\mathbb{E}\left[2\displaystyle\int^{T}_{s}
<\left(\vec{Y}^{t,x}_{s}-\vec{Y}^{t,x'}_{s}\right),\Delta f(r)>\,dr\right].$
Taking into account the expression of $f^{(i)}$ given by (\ref{2.13}) we then split $\Delta f(r)$ in the follows way: for $r\leq T$,
$$\Delta f(r)=(\Delta f(r))_{i=1,m}=\Delta_{1}(r)+\Delta_{2}(r)+\Delta_{3}(r)+\Delta_{4}(r)=(\Delta^{i}_{1}(r)+\Delta^{i}_{2}(r)+\Delta^{i}_{3}(r)+\Delta^{i}_{4}(r))_{i=1,m},$$
\newpage
where  for any $i=1,\ldots,m$, 
\begin{eqnarray*}
\Delta^{i}_{1}(r) & = & h^{(i)}\left(r,X^{t,x}_{r},\vec{Y}^{t,x}_{r},Z^{i;t,x}_{r},\displaystyle\int_{E}\gamma^{i}(r,X^{t,x}_{r},e)U^{i;t,x}_{r}(e)\lambda(de)\right)\\
&{}{}& -h^{(i)}\left(r,X^{t,x'}_{r},\vec{Y}^{t,x}_{r},Z^{i;t,x}_{r},\displaystyle\int_{E}\gamma^{i}(r,X^{t,x}_{r},e)U^{i;t,x}_{r}(e)\lambda(de)\right);\\ 
\Delta^{i}_{2}(r) & = & h^{(i)}\left(r,X^{t,x'}_{r},\vec{Y}^{t,x}_{r},Z^{i;t,x}_{r},\displaystyle\int_{E}\gamma^{i}(r,X^{t,x}_{r},e)U^{i;t,x}_{r}(e)\lambda(de)\right)\\
&{}{}& -h^{(i)}\left(r,X^{t,x'}_{r},\vec{Y}^{t,x'}_{r},Z^{i;t,x}_{r},\displaystyle\int_{E}\gamma^{i}(r,X^{t,x}_{r},e)U^{i;t,x}_{r}(e)\lambda(de)\right);\\ 
\Delta^{i}_{3}(r) & = & h^{(i)}\left(r,X^{t,x'}_{r},\vec{Y}^{t,x'}_{r},Z^{i;t,x}_{r},\displaystyle\int_{E}\gamma^{i}(r,X^{t,x}_{r},e)U^{i;t,x}_{r}(e)\lambda(de)\right)\\
&{}{}& -h^{(i)}\left(r,X^{t,x'}_{r},\vec{Y}^{t,x'}_{r},Z^{i;t,x'}_{r},\displaystyle\int_{E}\gamma^{i}(r,X^{t,x}_{r},e)U^{i;t,x}_{r}(e)\lambda(de)\right);\\ 
\Delta^{i}_{4}(r) & = & h^{(i)}\left(r,X^{t,x'}_{r},\vec{Y}^{t,x'}_{r},Z^{i;t,x'}_{r},\displaystyle\int_{E}\gamma^{i}(r,X^{t,x}_{r},e)U^{i;t,x}_{r}(e)\lambda(de)\right)\\
&{}{}& -h^{(i)}\left(r,X^{t,x'}_{r},\vec{Y}^{t,x'}_{r},Z^{i;t,x'}_{r},\displaystyle\int_{E}\gamma^{i}(r,X^{t,x'}_{r},e)U^{i;t,x'}_{r}(e)\lambda(de)\right).
\end{eqnarray*}
By Cauchy-Schwartz inequality, the inequality $2ab\leq\epsilon a^{2}+\frac{1}{\epsilon}b^{2}$, (\textbf{H2})-(ii), the estimate (\ref{3.16}) and Jensen's inequality we have: 
\begin{eqnarray}\label{4.36}
\mathbb{E}\left[2\displaystyle\int^{T}_{s}
\scriptstyle<\Delta Y(r),\Delta_{1}(r)>\,dr\right] & \leq & \mathbb{E}\left[\frac{1}{\epsilon}\int^{T}_{s}\scriptstyle{|\Delta Y(r)|^{2}\,dr+\epsilon}\displaystyle\int^{T}_{s}\rho\left(|X^{t,x}_{r}-X^{t,x'}_{r}|^{2}\right)\,dr\right]\nonumber\\
& \leq & \mathbb{E}\left[\frac{1}{\epsilon}\int^{T}_{s}|\Delta Y(r)|^{2}\,dr\right]+\epsilon\rho\left(M_{2}\left|x-x'\right|^{2}(1+\left|x-x'\right|^{2})\right).
\end{eqnarray}
Besides since $h^{(i)}$ is Lipschitz w.r.t. $(y,z,q)$ then,
\begin{equation}\label{4.37}
\mathbb{E}\left[2\displaystyle\int^{T}_{s}<\Delta Y(r),\Delta_{2}(r)>\,dr\right]\leq 2C\mathbb{E}\left[\int^{T}_{s}|\Delta Y(r)|^{2}\,dr\right],
\end{equation}
and
\begin{equation}\label{4.38}
\mathbb{E}\left[2\displaystyle\int^{T}_{s}<\Delta Y(r),\Delta_{3}(r)>\,dr\right]\leq\mathbb{E}\left[\frac{1}{\epsilon}\int^{T}_{s}|\Delta Y(r)|^{2}\,dr+C^{2}\epsilon\int^{T}_{s}|\Delta Z(r)|^{2}\,dr\right].
\end{equation}
It remains to obtain a control of the last term. But for any $s\in[t,T]$ we have,
\begin{eqnarray}\label{4.39}
& {}{}& \mathbb{E}\left[2\displaystyle\int^{T}_{s}<\Delta Y(r),\Delta_{4}(r)>\,dr\right]\\
& \leq & 2C\mathbb{E}\left[\int^{T}_{s}|\Delta Y(r)|\,dr\times \left|\int_{E}\left(\gamma(r,X^{t,x}_{r},e)U^{t,x}_{r}(e)-\gamma(r,X^{t,x'}_{r},e)U^{t,x'}_{r}(e)\right)\,\lambda(de)\right|\right]\nonumber.
\end{eqnarray}
Next by splitting the crossing terms as follows
$\gamma(r,X^{t,x}_{r},e)U^{t,x}_{r}(e)-\gamma(r,X^{t,x'}_{r},e)U^{t,x'}_{r}(e)=\Delta U_{s}(e)\gamma(s,X^{t,x}_{s},e)+U^{t,x'}_{s}\left(\gamma(s,X^{t,x}_{s},e)-\gamma(s,X^{t,x'}_{s},e)\right)$\\
and setting $\Delta \gamma_{s}(e):=\left(\gamma(s,X^{t,x}_{s},e)-\gamma(s,X^{t,x'}_{s},e)\right)$,\\
we obtain,
\begin{eqnarray}\label{4.40}
\mathbb{E}\left[2\displaystyle\int^{T}_{s}<\Delta Y(r),\Delta_{4}(r)>\,dr\right]& \leq & 2C\mathbb{E}\left[\int^{T}_{s}|\scriptstyle\Delta Y(r)|\times\left(\displaystyle\int_{E}\scriptstyle(|U^{t,x'}_{r}(e)\Delta\gamma_{r}(e)|+|\Delta U_{r}(e)\gamma(r,X^{t,x}_{r},e)|)\,\lambda(de)\right)\,dr\right]\nonumber\\
& \leq & \frac{2}{\epsilon}\mathbb{E}\left[\int^{T}_{s}|\Delta Y(r)|^{2}\,dr\right]+C^{2}\epsilon\mathbb{E}\left[\int^{T}_{s}\left(\int_{E}(|U^{t,x'}_{r}(e)\Delta\gamma_{r}(e)|\lambda(de)\right)^{2}\,dr\right]\nonumber\\
& {}{} &+C^{2}\epsilon\mathbb{E}\left[\int^{T}_{s}\left(\int_{E}(|\Delta U_{r}(e)\gamma(r,X^{t,x}_{r},e)|\lambda(de)\right)^{2}\,dr\right].
\end{eqnarray}
By Cauchy-Schwartz inequality, (\ref{2.12})-(ii), Jensen's inequality and (\ref{3.16}), and the result of Lemma \ref{lem4.1} it holds: 
\begin{eqnarray}\label{4.41}
\mathbb{E}\left[\int^{T}_{s}\left(\int_{E}(|U^{t,x'}_{r}(e)\Delta\gamma_{r}(e)|\lambda(de)\right)^{2}\,dr\right] & \leq & \mathbb{E}\left[\int^{T}_{s}\,dr\left(\int_{E}|U^{t,x'}_{r}(e)|^{2}\lambda(de)\right)\left(\int_{E}|\Delta\gamma_{r}(e)|^{2}\lambda(de)\right)\right]\nonumber\\
&\leq & \rho\left(M_{2}\left|x-x'\right|^{2}(1+\left|x-x'\right|^{2})\right)\times \mathbb{E}\left[\int^{T}_{s}\,dr\left(\int_{E}|U^{t,x'}_{r}(e)|^{2}\lambda(de)\right)\right]\nonumber\\
& \leq & C\rho\left(M_{2}\left|x-x'\right|^{2}(1+\left|x-x'\right|^{2})\right)(1+\left|x\right|^{\kappa}).
\end{eqnarray}
On the other hand using once more Cauchy-Schwartz inequality and (\ref{2.12})-(i) we get
\begin{eqnarray}\label{4.42}
\mathbb{E}\left[\int^{T}_{s}\left(\int_{E}(\scriptstyle|\Delta U_{r}(e)\gamma(r,X^{t,x}_{r},e)|\lambda(de)\right)^{2}\,dr\right] & \leq & \mathbb{E}\left[\int^{T}_{s}\,dr\left(\int_{E}(\scriptstyle|\Delta U_{r}(e)|^{2}\lambda(de)\right)\left(\int_{E}|\gamma(r,X^{t,x}_{r},e)|^{2}\lambda(de)\right)\right]\nonumber\\
& \leq & C\mathbb{E}\left[\int^{T}_{s}\,dr\left(\int_{E}(|\Delta U_{r}(e)|^{2}\lambda(de)\right)\right].
\end{eqnarray}
From (\ref{4.36}) to (\ref{4.42}) it follows that:
\begin{eqnarray*}\label{4.43}
& {}{} &\mathbb{E}\left[\left|\vec{Y}^{t,x}_{s}-\vec{Y}^{t,x'}_{s}\right|^{2}+\displaystyle\int^{T}_{s}\left|\Delta Z_{r}\right|^{2}\,dr+\displaystyle\int^{T}_{s}\|\Delta U_{r}\|^{2}_{\mathbb{L}^{2}(\lambda)}\,dr\right]\\
& {}{} &\leq\mathbb{E}\left[\left|g(X^{t,x}_{T})-g(X^{t,x'}_{T})\right|^{2}+2\displaystyle\int^{T}_{s}
<\left(\vec{Y}^{t,x}_{s}-\vec{Y}^{t,x'}_{s}\right),\Delta f(r)>\,dr\right]\\
& \leq & \rho\left(M_{2}\left|x-x'\right|^{2}(1+\left|x-x'\right|^{2})\right)\left[C(1+\left|x\right|^{\kappa})+1+\epsilon+\epsilon C^{3}\right]+\left(\frac{4}{\epsilon}+2C\right)\mathbb{E}\left[\int^{T}_{s}|\Delta Y(r)|^{2}\,dr\right]\\
&{}{}&+C^{2}\epsilon\mathbb{E}\left[\int^{T}_{s}|\Delta Z(r)|^{2}\,dr\right]+C^{3}\epsilon\mathbb{E}\left[\int^{T}_{s}\,dr\left(\int_{E}(|\Delta U_{r}(e)|^{2}\lambda(de)\right)\right].
\end{eqnarray*}
By choosing $\epsilon$ such that $\{\epsilon+\frac{4}{\epsilon}+\epsilon(2C^{3}+C^{2})+2C<1\}$ we deduce the existence of a constant $C\geq 0$ such that for any $s\in[t,T]$,\\
$\mathbb{E}\left[|\Delta Y(s)|^{2}\right]\leq \rho\left(M_{2}\left|x-x'\right|^{2}(1+\left|x-x'\right|^{2})\right)\left[C(1+\left|x\right|^{\kappa})\right]+\mathbb{E}\left[\displaystyle\int^{T}_{s}|\Delta Y(r)|^{2}\,dr\right]$\\
and by Gronwall lemma this implies that for any $s\in[t,T]$,\\
$$\mathbb{E}\left[|\Delta Y(s)|^{2}\right]\leq \rho\left(M_{2}\left|x-x'\right|^{2}(1+\left|x-x'\right|^{2})\right)\left[C(1+\left|x\right|^{\kappa})\right].$$
Finally in taking $s=t$ and considering (\ref{3.18}) we obtain the desired result.\\
\\
\end{proof}
Now we start a point which giving difference of definition viscosity solution between \cite{bar} and \cite{hama}.\\
It should also be noted that in this part will appear our first contribution after of course the first corresponding to the proposition \ref{prop4.2}.
It will be mainly about the use of the $\mathcal{M}$ class and the Bihari inequality as in \cite{fan}.  
\begin{proposition}\label{pro4.3}
For any $i=1,\ldots,m$, $(t,x)\in[0,T]\times\mathbb{R}^{k}$,
\begin{equation}
U^{i;t,x}_{s}(e)=u^{i}(s,X^{t,x}_{s-}+\beta(s,X^{t,x}_{s-},e))-u^{i}(s,X^{t,x}_{s-}),~~d\mathbb{P}\otimes ds\otimes d\lambda-\text{a.e. on}~\Omega\times[t,T]\times E.
\end{equation}

\end{proposition}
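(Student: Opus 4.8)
The plan is to match the jumps of the process $Y^{i;t,x}$ computed in two different ways, and then to turn an identity valid \emph{along the jumps of $\mu$} into one valid $d\mathbb{P}\otimes ds\otimes d\lambda$-almost everywhere, by means of the compensation formula. Throughout I fix $(t,x)$ and $i$ and write $X_s=X^{t,x}_s$, $Y_s=Y^{i;t,x}_s$, $U_s=U^{i;t,x}_s$.

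First I recall, from Proposition \ref{prop3.3}, that $Y_s=u^i(s,X_s)$ for every $s\in[t,T]$, $\mathbb{P}$-a.s., with $u^i$ continuous. Reading the dynamics (\ref{3.17})-(ii), the $ds$-term and the Brownian integral $\int Z\,dB$ are time-continuous, so the jumps of $Y$ come only from the integral against $\tilde\mu$; writing $\int_0^{\cdot}\!\int_E U_r(e)\,\tilde\mu(dr,de)=\int_0^{\cdot}\!\int_E U_r(e)\,\mu(dr,de)-\int_0^{\cdot}\!\int_E U_r(e)\,dr\,\lambda(de)$ and noting that the compensator term is absolutely continuous in time, I get $\Delta_s Y=\int_E U_s(e)\,\mu(\{s\}\times de)$ for all $s\in(t,T]$. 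On the other hand, continuity of $u^i$ and right-continuity of $X$ give $Y_{s-}=u^i(s,X_{s-})$, hence $\Delta_s Y=u^i(s,X_{s-}+\Delta_s X)-u^i(s,X_{s-})$; and the same reasoning applied to the forward equation (\ref{2.4}) yields $\Delta_s X=\int_E\beta(s,X_{s-},e)\,\mu(\{s\}\times de)$. Evaluating the two expressions for $\Delta_s Y$ at an atom $(s,e)$ of $\mu$ (at which $\mu(\{s\}\times\{e\})=1$ and $\mu$ carries no further mass on $\{s\}\times E$), I obtain
$$U_s(e)=u^i\big(s,X_{s-}+\beta(s,X_{s-},e)\big)-u^i\big(s,X_{s-}\big)\qquad\text{at every atom }(s,e)\text{ of }\mu.$$

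To promote this to the announced a.e.\ identity, I introduce the $[0,1]$-valued field
$$G_s(e):=1\wedge\Big|\,U_s(e)-\big(u^i(s,X_{s-}+\beta(s,X_{s-},e))-u^i(s,X_{s-})\big)\Big|,$$
which is $\mathbf{P}\otimes\mathcal{E}$-measurable because $U$ is predictable, $s\mapsto X_{s-}$ is predictable, $\beta$ is deterministic measurable, and $u^i$ is deterministic continuous. By the previous step $G$ vanishes on the support of $\mu$, so $\int_0^T\!\int_E G_s(e)\,\mu(ds,de)=0$ $\mathbb{P}$-a.s. Since $G\ge 0$ is predictable and $\nu(ds,de)=ds\,\lambda(de)$ is the compensator of $\mu$, the compensation formula for nonnegative predictable integrands — which holds with no integrability restriction, hence in particular when $\lambda(E)=\infty$ — gives
$$\mathbb{E}\Big[\int_0^T\!\int_E G_s(e)\,ds\,\lambda(de)\Big]=\mathbb{E}\Big[\int_0^T\!\int_E G_s(e)\,\mu(ds,de)\Big]=0,$$
whence $G=0$, i.e.\ the claimed identity, holds $d\mathbb{P}\otimes ds\otimes d\lambda$-a.e.\ on $\Omega\times[t,T]\times E$.

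The one genuinely delicate point is the first step: the clean identification $\Delta_s Y=\int_E U_s(e)\,\mu(\{s\}\times de)$. The stochastic integral $\int_0^{\cdot}\!\int_E U_r(e)\,\tilde\mu(dr,de)$ is defined only as a compensated $L^2$-integral, but it does share its jumps with $\int_0^{\cdot}\!\int_E U_r(e)\,\mu(dr,de)$ precisely because its compensator $\int_0^{\cdot}\!\int_E U_r(e)\,dr\,\lambda(de)$ is time-continuous; once this is granted, the unbounded mass of $\lambda$ is harmless, the whole a.e.\ argument being carried out with the bounded nonnegative integrand $G$. (An alternative route would rely on the Markov/flow property of $(X^{t,x})$ and a time-shift of the representation in Proposition \ref{prop3.3}, but the jump-matching argument above is the most direct.) Finally, the right-hand side of the asserted identity is automatically in $\mathbb{L}^2_m(\lambda)$ for $ds$-a.e.\ $s$, since it then coincides with $U_s\in\mathbb{L}^2_m(\lambda)$; the modulus-of-continuity estimate of Proposition \ref{prop4.2} together with $|\beta(s,x,e)|\le C(1\wedge|e|)$ and $\int_E(1\wedge|e|^2)\,\lambda(de)<\infty$ provides pointwise control of this term but is not needed for the identity itself.
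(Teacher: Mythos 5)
Your proof is correct, but it follows a genuinely different route from the paper. The paper proceeds by truncating the L\'evy measure ($\mu_{k}(ds,de)=1_{\{|e|\ge 1/k\}}\mu(ds,de)$, so that $\lambda_{k}(E)<\infty$), introducing the auxiliary forward--backward system $({}^{k}X^{t,x},{}^{k}Y^{t,x},{}^{k}Z^{t,x},{}^{k}U^{t,x})$, invoking the finite-measure representation of \cite{hamaMor} to get (\ref{4.49}), establishing the uniform estimate (\ref{4.48}) as in Proposition \ref{prop4.2}, and then proving the convergences (\ref{4.50})--(\ref{4.60}) via the Mao condition, Bihari's inequality, Gronwall and Lemma \ref{lem4.1}, before passing to the limit in (\ref{4.61}). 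You instead argue directly: since $Y^{i;t,x}_{s}=u^{i}(s,X^{t,x}_{s})$ with $u^{i}$ jointly continuous, the jumps of $Y^{i;t,x}$ computed from the BSDE (they equal $U^{i;t,x}_{s}(e)$ at each atom $(s,e)$ of $\mu$, because the compensated $L^{2}$-integral is by construction the purely discontinuous martingale with exactly these jumps, the compensator $ds\,\lambda(de)$ being time-continuous) must coincide with $u^{i}(s,X^{t,x}_{s-}+\beta(s,X^{t,x}_{s-},e))-u^{i}(s,X^{t,x}_{s-})$, and the bounded nonnegative predictable field $G$ together with the compensation formula (valid with no integrability restriction, hence unaffected by $\lambda(E)=\infty$) upgrades the identity from the atoms of $\mu$ to $d\mathbb{P}\otimes ds\otimes d\lambda$-a.e. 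This is shorter and avoids the truncation and all the convergence machinery; you correctly identified and handled the only delicate point (jump identification of the $L^{2}$ stochastic integral), and the remaining issues are cosmetic (work on $(t,T]$ rather than $[0,T]$, and take a $\mathcal{P}\otimes\mathcal{E}$-measurable version of $U$). What the paper's heavier approach buys, however, is the auxiliary apparatus itself: the approximating functions $u^{k}$, the uniform modulus estimate (\ref{4.48}) and the convergences (\ref{4.50}), (\ref{4.59})--(\ref{4.61}) are re-used in Step 2 of the proof of the main theorem in Section 5, so your argument proves Proposition \ref{pro4.3} but would not by itself replace Steps 1--2 of the paper's proof where they are cited later.
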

\begin{proof}
\textbf{Step 1: Truncation of the L\'evy measure}\\
For any $k\geq 1$, let us first introduce a new Poisson random measure $\mu_{k}$ (obtained from the truncation of $\mu$) and its associated compensator $\nu_{k}$ as follows:
$$\mu_{k}(ds,de)=1_{\{|e|\geq\frac{1}{k}\}}\mu(ds,de)~~\text{and }\nu_{k}(ds,de)=1_{\{|e|\geq\frac{1}{k}\}}\nu(ds,de).$$ 
Which means that, as usual, $\tilde{\mu_{k}}(ds,de):=(\mu_{k}-\nu_{k})(ds,de)$, is the associated random martingale measure.\\
The main point to notice is that 
\begin{eqnarray}
\lambda_{k}(E)=\displaystyle\int_{E}\,\lambda_{k}(de)& = &\displaystyle\int_{E}1_{\{|e|\geq\frac{1}{k}\}}\,\lambda(de)\nonumber\\
{}{}&=&\displaystyle\int_{\{|e|\geq\frac{1}{k}\}}\,\lambda(de)\nonumber\\
{}{}&=&\lambda(\{|e|\geq\frac{1}{k}\})<\infty.
\end{eqnarray}
As in \cite{hama}, let us introduce the process $^{k}X^{t,x}$ solving the following standard SDE of jump-diffusion type:
\begin{eqnarray}
& {}{} & ^{k}X^{t,x}_{s}=x+\displaystyle\int^{s}_{t}b(r,^{k}X^{t,x}_{r})\,dr+\displaystyle\int^{s}_{t}\sigma(r,^{k}X^{t,x}_{r})\,dB_{r}\nonumber\\
& {}{} &\qquad\qquad+\displaystyle\int^{s}_{t}\displaystyle\int_{\mathrm{E}}\beta(r,^{k}X^{t,x}_{r-},e)\tilde{\mu}_{k}\,(dr,de),~~~t\leq s\leq T;~^{k}X^{t,x}_{r}=x~\text{if }s\leq t.\nonumber\\
\end{eqnarray}
 Note that thanks to the assumptions on $b$, $\sigma$, $\beta$ the process $^{k}X^{t,x}$ exists and is unique. Moreover it satisfies the same estimates as in (\ref{3.16}) since $\lambda_{k}$ is just a truncation at the origin of $\lambda$ which integrates $(1\wedge|e|^{2})_{e\in E}$.\\
On the other hand let us consider the following Markovian BSDE with jumps
 \begin{equation}\label{4.46}
\left
\{\begin{array}{ll}
(i)~\mathbb{E}\left[\sup_{s\leq T}\left|^{k}Y^{t,x}_{s}\right|^{2}+\displaystyle\int^{T}_{s}\left|^{k}Z^{t,x}_{r}\right|^{2}\,dr+\displaystyle\int^{T}_{s}\|^{k}U^{t,x}_{r}\|^{2}_{\mathbb{L}^{2}(\lambda_{k})}\,dr\right]<\infty\\
(ii)~^{k}{Y}^{t,x}:=(^{k}Y^{i,t,x})_{i=1,m}\in\mathcal{S}^{2}(\mathbb{R}^{m}),~^{k}Z^{t,x}:=(^{k}Z^{i,t,x})_{i=1,m}\in\mathbb{H}^{2}(\mathbb{R}^{m\times d}),\\
^{k}U^{t,x}:=(^{k}U^{i,t,x})_{i=1,m}\in\mathbb{H}^{2}(\mathbb{L}^{2}_{m}(\lambda_{k}));\\
(iii)~^{k}Y^{t,x}_{s}=g(^{k}X^{t,x}_{T})+\displaystyle\int^{T}_{s}f_{\mu_{k}}(r,^{k}X^{t,x}_{r},^{k}Y^{t,x}_{r},^{k}Z^{t,x}_{r},^{k}U^{t,x}_{r})\,dr\\
\quad\quad\quad\quad\quad\quad\quad\quad-\displaystyle\int^{T}_{s}\left\lbrace ^{k}Z^{t,x}_{r}\,dB_{r}+\displaystyle\int_{\mathrm{E}}\{^{k}U^{t,x}_{r}(e)\}\tilde{\mu}_{k}(dr,de)\right\rbrace ,\quad s\leq T;\\
(iv)~^{k}Y^{t,x}_{T}=g(^{k}X^{t,x}_{T}). 
\end{array}
\right.
\end{equation}
Finally let us introduce the following functions $(f^{(i)})_{i=1,m}$ defined by:
$\forall (t,x,y,z,\zeta)\in[0,T]\times\mathbb{R}^{k}\times\mathbb{R}^{m}\times\mathbb{R}^{m\times d}\times\mathbb{L}^{2}_{m}(\lambda_{k}),\\f_{\mu_{k}}(t,x,y,z,\zeta)=(f^{(i)}_{\mu_{k}}(t,x,y,z_{i},\zeta_{i}))_{i=1,m}:=\left( h^{(i)}\left(t,x,y,z,\displaystyle\int_{E}\gamma^{i}(t,x,e)\zeta_{i}(e)\lambda_{k}(de)\right)\right)_{i=1,m}$.\\
First let us emphasize that this latter BSDE is related to the filtration $(\mathcal{F}^{k}_{s})_{s\leq T}$ generated by the Brownian motion and the independent random measure $\mu_{k}$. However this point does not raise major issues since for any $s\leq T$, $\mathcal{F}^{k}_{s}\subset \mathcal{F}_{s}$ and thanks to the relationship between $\mu$ and $\mu_{k}$.\\
Next by the properties of the functions $b$, $\sigma$, $\beta$ and by the same opinions of proposition \ref{prop3.2} and proposition \ref{prop3.3}, there exists an unique quadriple $(^{k}Y^{t,x},^{k}Z^{t,x},^{k}U^{t,x})$ solving (\ref{4.46}) and there also exists a function $u^{k}$ from $[0,T]\times \mathbb{R}^{k}$ into $\mathbb{R}^{m}$ of $\Pi^{c}_{g}$ such that
\begin{equation}\label{4.47}
\forall s\in[t,T],~~^{k}Y^{t,x}:=u^{k}(s,^{k}X^{t,x}),~\mathbb{P}-a.s.
\end{equation}  
Moreover as in proposition \ref{prop4.2}, there exists positive constants $C$ and $\kappa$ wich do not depend on $k$ such that:
\begin{equation}\label{4.48}
\forall t,x,x',~~|u^{k}(t,x)-u^{k}(t,x')|\leq \rho\left(M_{2}\left|x-x'\right|^{2}(1+\left|x-x'\right|^{2})\right)\left[C(1+\left|x\right|^{\kappa})\right].
\end{equation}
Finally as $\lambda_{k}$ is finite then we have the following relationship between the process $^{k}U^{t,x}:=(^{k}U^{i;t,x})_{i=1,m}$ and the deterministics functions $u^{k}:=(u^{k}_{i})_{i=1,m}$ (see \cite{hamaMor}): $\forall i=1,\ldots,m$; 
$$^{k}U^{i;t,x}_{s}(e)=u^{k}_{i}(s,^{k}X^{t,x}_{s-}+\beta(s,^{k}X^{t,x}_{s-},e))-u^{k}_{i}(s,^{k}X^{t,x}_{s-}),~~d\mathbb{P}\otimes ds\otimes d\lambda_{k}-a.e.~\text{on }\Omega\times[t,T]\times E.$$
This is mainly due to the fact that $^{k}U^{t,x}$ belongs to $\mathbb{L}^{1}\cap\mathbb{L}^{2}(ds\otimes d\mathbb{P}\otimes d\lambda_{k})$ since $\lambda_{k}(E)<\infty$ and then we can split the stochastic integral w.r.t. $\tilde{\mu}_{k}$ in (\ref{4.46}). Therefore for all $i=1,\ldots,m$,
\begin{equation}\label{4.49}
^{k}U^{i;t,x}_{s}(e)1_{\{|e|\geq \frac{1}{k}\}}=(u^{k}_{i}(s,^{k}X^{t,x}_{s-}+\beta(s,^{k}X^{t,x}_{s-},e))-u^{k}_{i}(s,^{k}X^{t,x}_{s-}))1_{\{|e|\geq \frac{1}{k}\}},~~d\mathbb{P}\otimes ds\otimes d\lambda_{k}-a.e.~\text{on }\Omega\times[t,T]\times E.
\end{equation}
\end{proof}
\textbf{Step 2: Convergence of the auxiliary processes}\\
Let's now prove the following convergence result;
\begin{equation}\label{4.51}
\mathbb{E}\left[\sup_{s\leq T}\left|Y^{t,x}_{s}-^{k}Y^{t,x}_{s}\right|^{2}+\displaystyle\int^{T}_{0}\left|Z^{t,x}_{s}-^{k}Z^{t,x}_{s}\right|^{2}\,ds+\displaystyle\int^{T}_{0}\,ds\displaystyle\int_{E}\lambda(de)\left|U^{t,x}_{s}(e)-^{k}U^{t,x}_{s}(e)1_{\{|e|\geq \frac{1}{k}\}}\right|^{2}\right]\substack{\longrightarrow\\ k\longrightarrow+\infty}0;
\end{equation}
where $(Y^{t,x},Z^{t,x},U^{t,x})$ is solution of the BSDE with jumps (\ref{3.17}).\\
It should be noted that this convergence (\ref{4.51}) requires as the technique used in (\ref{prop4.2}) the following convergence:  
\begin{equation}\label{4.50}
\mathbb{E}\left[\sup_{s\leq T}\left|X^{t,x}_{s}-^{k}X^{t,x}_{s}\right|^{2}\right]\substack{\longrightarrow\\ k\longrightarrow+\infty}0.
\end{equation}
\begin{proof}[of \ref{4.50}]
\begin{eqnarray}
& {}{} & X^{t,x}_{s}-^{k}X^{t,x}_{s}=\displaystyle\int^{s}_{0}(b(r,X^{t,x}_{r})-b(r,^{k}X^{t,x}_{r}))\,dr+\displaystyle\int^{s}_{0}(\sigma(r,X^{t,x}_{r})-\sigma(r,^{k}X^{t,x}_{r}))\,dB_{r}\nonumber\\
& {}{} &\qquad\qquad+\displaystyle\int^{s}_{0}\displaystyle\int_{\mathrm{E}}(\beta(r,X^{t,x}_{r-},e)-\beta(r,^{k}X^{t,x}_{r-},e)1_{\{|e|\geq \frac{1}{k}\}})\tilde{\mu}_{k}\,(dr,de).\nonumber\\
\end{eqnarray}
Next let $\eta\in[0,T]$. Since $|a+b+c|^{2}\leq 3(|a|^{2}+|b|^{2}+|c|^{2})$ for any real constants $a$, $b$ and $c$ and by the Cauchy-Schwartz and Burkholder-Davis-Gundy inequalities we have:
\newpage
\begin{eqnarray}\label{4.36n}
& {}{} & \mathbb{E}\left[\sup_{0\leq s\leq \eta}\left|X^{t,x}_{s}-^{k}X^{t,x}_{s}\right|^{2}\right]\nonumber\\
{} & \leq & 3\mathbb{E}\left[\sup_{0\leq s\leq \eta}\left|\displaystyle\int^{s}_{0}(b(r,X^{t,x}_{r})-b(r,^{k}X^{t,x}_{r}))\,dr\right|^{2}+\sup_{0\leq s\leq \eta}\left|\displaystyle\int^{s}_{0}(\sigma(r,X^{t,x}_{r})-\sigma(r,^{k}X^{t,x}_{r}))\,dB_{r}\right|^{2}
\right.\nonumber\\
& {}{} &\left.
\qquad\qquad+\sup_{0\leq r\leq \eta}\left|\displaystyle\int^{r}_{0}\displaystyle\int_{\mathrm{E}}(\beta(r,X^{t,x}_{r-},e)-\beta(r,^{k}X^{t,x}_{r-},e)1_{\{|e|\geq \frac{1}{k}\}})\tilde{\mu}_{k}\,(dr,de)\right|^{2}\right]\nonumber\\
{} & \leq & C\mathbb{E}\left[\displaystyle\int^{\eta}_{0}\sup_{0\leq \eta\leq r}\left\lbrace \left|(b(r,X^{t,x}_{r})-b(r,^{k}X^{t,x}_{r}))\right|^{2}+\left|(\sigma(r,X^{t,x}_{r})-\sigma(r,^{k}X^{t,x}_{r}))\right|^{2}\right\rbrace\,dr\right]\nonumber\\
&{}{}&+C\mathbb{E}\left[\displaystyle\int^{\eta}_{0}\displaystyle\int_{\mathrm{E}}\sup_{0\leq r\leq \eta}\left|\beta(r,X^{t,x}_{r-},e)-\beta(r,^{k}X^{t,x}_{r-},e)\right|^{2}\,\lambda_{k}(de)dr
\right.\nonumber\\
 & {}{} &\left.
 +\displaystyle\int^{\eta}_{0}\displaystyle\int_{\mathrm{E}}\sup_{0\leq r\leq \eta}\left|\beta(r,X^{t,x}_{r-},e)\right|^{2}1_{\{|e|<\frac{1}{k}\}}\,\lambda(de)dr\right]\nonumber\\
\end{eqnarray}
Since $b$, $\sigma$ and  $\beta$ belong to $\mathcal{M}$, then we have: $\forall r\in[0,T]$,
\begin{equation} 
\sup_{0\leq \tau\leq r}\left\lbrace \left|(b(r,X^{t,x}_{\tau})-b(r,^{k}X^{t,x}_{\tau}))\right|^{2}+\left|(\sigma(r,X^{t,x}_{\tau})-\sigma(r,^{k}X^{t,x}_{\tau}))\right|^{2}\right\rbrace \leq C\rho(\left|X^{t,x}_{\tau}-^{k}X^{t,x}_{\tau}\right|^{2})
\end{equation}
and
\begin{equation}
\displaystyle\int_{\mathrm{E}}\sup_{0\leq r\leq \eta}\left|\beta(r,X^{t,x}_{r-},e)-\beta(r,^{k}X^{t,x}_{r-},e)\right|^{2}\,\lambda_{k}(de)\leq  C\rho(\left|X^{t,x}_{\tau}-^{k}X^{t,x}_{\tau}\right|^{2})
\end{equation}
Plug now those two last inequalities in the previous one to obtain: $\forall \eta\in[0,T]$,
\begin{eqnarray*}
\mathbb{E}\left[\sup_{0\leq s\leq \eta}\left|X^{t,x}_{s}-^{k}X^{t,x}_{s}\right|^{2}\right] &\leq & C\mathbb{E}\left[\displaystyle\int^{\eta}_{0}\rho(\left|X^{t,x}_{\tau}-^{k}X^{t,x}_{\tau}\right|^{2})\,dr+\displaystyle\int_{\{|e|<\frac{1}{k}\}}(1\wedge|e|^{2})\,\lambda(de)\right]\\
&\leq & C\displaystyle\int^{\eta}_{0}\rho(\mathbb{E}\left[\left|X^{t,x}_{\tau}-^{k}X^{t,x}_{\tau}\right|^{2}\right])\,dr+C\mathbb{E}\left[\displaystyle\int_{\{|e|<\frac{1}{k}\}}(1\wedge|e|^{2})\,\lambda(de)\right]~~(\text{by Jensen}), 
\end{eqnarray*}
By Bihari's inequality (see \cite{fan} page 171 and \cite{pa}) and the fact of $\displaystyle\int_{\{|e|<\frac{1}{k}\}}(1\wedge|e|^{2})\,\lambda(de)\substack{\longrightarrow\\ k\longrightarrow+\infty}0$; we obtain our result the (\ref{4.50}).
\end{proof}
\\
We now focus on (\ref{4.51}). Note that we can apply Ito's formula, even if the BSDEs are related to filtrations and Poisson random measures which are not the same, since:\\
(i) $\mathcal{F}^{k}_{s}\subset\mathcal{F}_{s}$, $\forall s\leq T$;\\
(ii) for any $s\leq T$, $\displaystyle\int^{s}_{0}\displaystyle\int_{\mathrm{E}}^{k}U^{i;t,x}(e)\tilde{\mu}_{k}\,(dr,de)=\displaystyle\int^{s}_{0}\displaystyle\int_{\mathrm{E}}^{k}U^{i;t,x}(e)1_{\{|e|\geq\frac{1}{k}\}}\tilde{\mu}\,(dr,de)$ and then the first $(\mathcal{F}^{k}_{s})_{s\leq T}-$martingale is also an $(\mathcal{F}_{s})_{s\leq T}-$martingale.
$\forall s\in[0,T]$,
\begin{eqnarray}
& {}{} &\left|\vec{Y}^{t,x}_{s}-^{k}Y^{t,x}_{s}\right|^{2}+\displaystyle\int^{T}_{0}\left|Z^{t,x}_{s}-^{k}Z^{t,x}_{s}\right|^{2}\,ds+\sum_{s\leq r\leq T}(^{k}\Delta_{r}\vec{Y}^{t,x}_{r})^{2}\nonumber\\
& {}{} &=\left|g(X^{t,x}_{T})-g(^{k}X^{t,x}_{T})\right|^{2}+2\displaystyle\int^{T}_{s}
\left(\vec{Y}^{t,x}_{r}-^{k}Y^{t,x}_{r}\right)\times ^{k}\Delta f(r)\,dr\nonumber \\
& {}{} &-2\displaystyle\int^{T}_{s}\displaystyle\int_{\mathrm{E}}\left(\vec{Y}^{t,x}_{r}
-^{k}Y^{t,x}_{r}\right)\left(^{k}\Delta U_{r}(e)\right)\tilde{\mu}(\mathrm{d}r,\mathrm{d}e)-2\displaystyle\int^{T}_{s} \left(\vec{Y}^{t,x}_{r}-^{k}\vec{Y}^{t,x}_{r}\right)\left(^{k}\Delta Z_{r}\right)\,dB_{r};\nonumber
\end{eqnarray}
and taking expectation we obtain: $\forall s\in[t,T]$,

\begin{eqnarray}\label{4.53}
& {}{} &\mathbb{E}\left[\left|\vec{Y}^{t,x}_{s}-^{k}Y^{t,x}_{s}\right|^{2}+\displaystyle\int^{T}_{0}\left\lbrace\left|Z^{t,x}_{s}-^{k}Z^{t,x}_{s}\right|^{2}+\displaystyle\int_{E}\left|U^{t,x}_{s}-^{k}U^{t,x}_{s}1_{\{|e|\geq\frac{1}{k}\}}\right|^{2}\,\lambda(de)\right\rbrace\,ds\right]\nonumber\\
& {}{} &\leq\mathbb{E}\left[\left|g(X^{t,x}_{T})-g(^{k}X^{t,x}_{T})\right|^{2}+2\displaystyle\int^{T}_{s}
\left(\vec{Y}^{t,x}_{r}-^{k}Y^{t,x}_{r}\right)\times ^{k}\Delta f(r)\,dr\right];\nonumber\\
\end{eqnarray}
where the processes $^{k}\Delta X_{r}$, $^{k}\Delta Y_{r}$, $^{k}\Delta f(r)$, $^{k}\Delta Z_{r}$ and $^{k}\Delta U_{r}$ are defined as follows: $\forall r\in[0,T]$,\\
$^{k}\Delta f(r):=((^{k}\Delta f^{(i)}(r))_{i=1,m}=(f^{(i)}(r,X^{t,x}_{r},\vec{Y}^{t,x}_{r},Z^{i;t,x}_{r},U^{i;t,x}_{r})-f^{(i)}_{k}(r,^{k}X^{t,x}_{r},^{k}Y^{t,x}_{r},^{k}Z^{t,x}_{r},^{k}U^{t,x}_{r}))_{i=1,m}$,
$^{k}\Delta X_{r}=X^{t,x}_{r}-^{k}X^{t,x}_{r}$, $^{k}\Delta Y(r)=\vec{Y}^{t,x}_{r}-^{k}Y^{t,x}_{r}=(Y^{j;t,x}_{r}-^{k}Y^{j;t,x}_{r})_{j=1,m}$, $^{k}\Delta Z_{r}=Z^{t,x}_{r}-^{k}Z^{t,x}_{r}$ and $^{k}\Delta U_{r}=U^{t,x}_{r}-^{k}U^{t,x}_{s}1_{\{|e|\geq\frac{1}{k}\}}$.\\

Next let us set for $r\leq T$,
$$^{k}\Delta f(r)=(f(r,X^{t,x}_{r},\vec{Y}^{t,x}_{r},Z^{t,x}_{r},U^{t,x}_{r})-f_{k}(r,^{k}X^{t,x}_{r},^{k}Y^{t,x}_{r},^{k}Z^{t,x}_{r},^{k}U^{t,x}_{r}))=A(r)+B(r)+C(r)+D(r);$$
where  for any $i=1,\ldots,m$, 
\begin{eqnarray*}
A(r) & = & \left(h^{(i)}\left(r,X^{t,x}_{r},\vec{Y}^{t,x}_{r},Z^{i;t,x}_{r},\displaystyle\int_{E}\gamma^{i}(r,X^{t,x}_{r},e)U^{i;t,x}_{r}(e)\lambda(de)\right)
\right.\\
&{}{}&\left.
-h^{(i)}\left(r,^{k}X^{t,x}_{r},\vec{Y}^{t,x}_{r},Z^{i;t,x}_{r},\displaystyle\int_{E}\gamma^{i}(r,X^{t,x}_{r},e)U^{i;t,x}_{r}(e)\lambda(de)\right)\right)_{i=1,m};\\ 
B(r) & = & \left(h^{(i)}\left(r,^{k}X^{t,x}_{r},\vec{Y}^{t,x}_{r},Z^{i;t,x}_{r},\displaystyle\int_{E}\gamma^{i}(r,X^{t,x}_{r},e)U^{i;t,x}_{r}(e)\lambda(de)\right)
\right.\\
&{}{}&\left.
-h^{(i)}\left(r,^{k}X^{t,x}_{r},^{k}Y^{t,x}_{r},Z^{i;t,x}_{r},\displaystyle\int_{E}\gamma^{i}(r,X^{t,x}_{r},e)U^{i;t,x}_{r}(e)\lambda(de)\right)\right)_{i=1,m};\\ 
C(r) & = & \left(h^{(i)}\left(r,^{k}X^{t,x}_{r},^{k}Y^{t,x}_{r},Z^{i;t,x}_{r},\displaystyle\int_{E}\gamma^{i}(r,X^{t,x}_{r},e)U^{i;t,x}_{r}(e)\lambda(de)\right)
\right.\\
&{}{}&\left.
-h^{(i)}\left(r,^{k}X^{t,x}_{r},^{k}Y^{t,x}_{r},^{k}Z^{i;t,x}_{r},\displaystyle\int_{E}\gamma^{i}(r,X^{t,x}_{r},e)U^{i;t,x}_{r}(e)\lambda(de)\right)\right)_{i=1,m};\\ 
D(r) & = & \left( h^{(i)}\left(r,^{k}X^{t,x}_{r},^{k}Y^{t,x}_{r},^{k}Z^{i;t,x}_{r},\displaystyle\int_{E}\gamma^{i}(r,X^{t,x}_{r},e)U^{i;t,x}_{r}(e)\lambda(de)\right)
\right.\\
&{}{}&\left.
-h^{(i)}\left(r,^{k}X^{t,x}_{r},^{k}Y^{t,x}_{r},^{k}Z^{i;t,x}_{r},\displaystyle\int_{E}\gamma^{i}(r,^{k}X^{t,x}_{r},e)^{k}U^{i;t,x}_{r}(e)\lambda_{k}(de)\right)\right)_{i=1,m}.
\end{eqnarray*}
Since $g$ belongs to $\mathcal{M}$ and by (\ref{4.50}) we have,
\begin{equation}\label{4.54}
\mathbb{E}\left[\left|g(X^{t,x}_{T})-g(^{k}X^{t,x}_{T})\right|^{2}\right]\substack{\displaystyle\longrightarrow 0\\ k\rightarrow+\infty}
\end{equation}
Now we will interest to $\mathbb{E}\left[\displaystyle\int^{T}_{s}
\left(\vec{Y}^{t,x}_{r}-^{k}Y^{t,x}_{r}\right)\times ^{k}\Delta f(r)\,dr\right]$ for found (\ref{4.51}).\\
By (\ref{2.11}), we have: $\forall r\in[0,T]$
\begin{eqnarray}\label{4.56} 
\left|A(r)\right|^{2} &\leq & \rho(\left|X^{t,x}_{r}-^{k}X^{t,x}_{r}\right|^{2});\nonumber\\
\left|B(r)\right|+\left|C(r)\right| &\leq & C\{\left|\vec{Y}^{t,x}_{r}-^{k}Y^{t,x}_{r}\right|+\left|Z^{t,x}_{r}-^{k}Z^{t,x}_{r}\right|\}.\nonumber\\
\end{eqnarray}
Now let us deal with $D(r)$ which is more involved. First note that $D(r)=(D_{i}(r))_{i=1,m}$ where 
\begin{eqnarray*}
D_{i}(r)& = & h^{(i)}\left(r,^{k}X^{t,x}_{r},^{k}Y^{t,x}_{r},^{k}Z^{i;t,x}_{r},\displaystyle\int_{E}\gamma^{i}(r,X^{t,x}_{r},e)U^{i;t,x}_{r}(e)\lambda(de)\right)\\
&{}{}&-h^{(i)}\left(r,^{k}X^{t,x}_{r},^{k}Y^{t,x}_{r},^{k}Z^{i;t,x}_{r},\displaystyle\int_{E}\gamma^{i}(r,^{k}X^{t,x}_{r},e)^{k}U^{i;t,x}_{r}(e)\lambda_{k}(de)\right).
\end{eqnarray*}
But as $h^{(i)}$ is Lipschitz w.r.t to the last component $q$ and by the relation (\ref{2.12}) then ,
\begin{eqnarray}\label{4.57}
\left|D(r)\right|^{2} & \leq & C\left\lbrace \displaystyle\int_{E}\left|\gamma^{i}(r,X^{t,x}_{r},e)U^{i;t,x}_{r}(e)-\gamma^{i}(r,^{k}X^{t,x}_{r},e)^{k}U^{i;t,x}_{r}(e)1_{\{|e|\geq\frac{1}{k}\}}\right|\lambda(de)\right\rbrace^{2}\nonumber\\
{}&\leq & C\left\lbrace \left\lbrace \displaystyle\int_{E}\left|\gamma^{i}(r,X^{t,x}_{r},e)-\gamma^{i}(r,^{k}X^{t,x}_{r},e)\right|\left|U^{i;t,x}_{r}(e)\right|\,\lambda(de)\right\rbrace^{2}
\right.\nonumber\\
&{}{}&\left.
+\left\lbrace \displaystyle\int_{E}\left|\gamma^{i}(r,X^{t,x}_{r},e)\right|\left| U^{i;t,x}_{r}(e)-^{k}U^{i;t,x}_{r}(e)1_{\{|e|\geq\frac{1}{k}\}}\right|\lambda(de)\right\rbrace^{2}\right\rbrace\nonumber\\
{}&\leq & C\rho(\left|X^{t,x}_{r}-^{k}X^{t,x}_{r}\right|^{2})\left\lbrace\displaystyle\int_{E}\left|U^{i;t,x}_{r}(e)\right|^{2}\lambda(de)\right\rbrace\nonumber\\
&{}{}&+C\displaystyle\int_{E}(1\wedge|e|)^{2}\lambda(de)\times\displaystyle\int_{E}\left| U^{i;t,x}_{r}(e)-^{k}U^{i;t,x}_{r}(e)1_{\{|e|\geq\frac{1}{k}\}}\right|^{2}\lambda(de),
\end{eqnarray}
By using the majorations obtain in (\ref{4.56}) and in (\ref{4.57}) and Cauchy-Schwartz inequality;
\begin{eqnarray}\label{4.58}
& {}{} &\mathbb{E}\left[\left|\vec{Y}^{t,x}_{s}-^{k}Y^{t,x}_{s}\right|^{2}+\displaystyle\int^{T}_{0}\left\lbrace\left|Z^{t,x}_{s}-^{k}Z^{t,x}_{s}\right|^{2}+\displaystyle\int_{E}\left|U^{t,x}_{s}-^{k}U^{t,x}_{s}1_{\{|e|\geq\frac{1}{k}\}}\right|^{2}\,\lambda(de)\right\rbrace\,ds\right]\nonumber\\
& {}{} &\leq\mathbb{E}\left[\left|g(X^{t,x}_{T})-g(^{k}X^{t,x}_{T})\right|^{2}\right]+C_{\epsilon}\mathbb{E}\left[\displaystyle\int^{T}_{s}
\left|\vec{Y}^{t,x}_{r}-^{k}Y^{t,x}_{r}\right|^{2}\right]+\displaystyle\int^{T}_{s}\rho(\mathbb{E}\left[\left|X^{t,x}_{r}-^{k}X^{t,x}_{r}\right|^{2}\right])\,dr\nonumber\\
& {}{} &+C\sqrt{\mathbb{E}\left[\sup_{s\leq r\leq T}\rho^{2}(\left|X^{t,x}_{r}-^{k}X^{t,x}_{r}\right|^{2})\right]}\times\sqrt{\mathbb{E}\left[\left(\displaystyle\int^{T}_{s}\displaystyle\int_{E}\left|U^{i;t,x}_{r}(e)\right|^{2}\lambda(de)\,dr\right)^{2}\right]}\nonumber\\
& {}{} &+C\epsilon\mathbb{E}\left[\displaystyle\int^{T}_{t}\displaystyle\int_{E}\left|U^{t,x}_{s}-^{k}U^{t,x}_{s}1_{\{|e|\geq\frac{1}{k}\}}\right|^{2}\,\lambda(de)\,ds\right]
\end{eqnarray}
For $C\epsilon<1$ and the Gronwall's lemma going through the dominated convergence theorem, the continuity of $g$ and $\rho$ and the lemma \ref{lem4.1}, then
\begin{equation}\label{4.59}
\mathbb{E}\left[\left|\vec{Y}^{t,x}_{s}-^{k}Y^{t,x}_{s}\right|^{2}\right]\substack{\displaystyle\longrightarrow 0\\ k\rightarrow+\infty}
\end{equation}
and in taking $s=t$ we obtain $u^{k}(t,x)\substack{\displaystyle\longrightarrow u(t,x)\\ k\rightarrow+\infty}$. As $(t,x)\in[0,T]\times \mathbb{R}^{k}$ is arbitrary then u$^{k}\substack{\displaystyle\longrightarrow u\\ k\rightarrow+\infty}$ pointwisely.\\
Taking the same arguments as when getting (\ref{4.59}); we once again add Lebesgue's dominated convergence theorem to have,
\begin{equation}\label{4.60}
\mathbb{E}\left[\displaystyle\int^{T}_{t}\displaystyle\int_{E}\left|U^{t,x}_{s}-^{k}U^{t,x}_{s}1_{\{|e|\geq\frac{1}{k}\}}\right|^{2}\,\lambda(de)\,ds\right]\substack{\displaystyle\longrightarrow 0\\ k\rightarrow+\infty}.
\end{equation}
\textbf{Step 3: Conclusion}\\
First note that by (\ref{4.48}) and the pointwise convergence of $(u^{k})_{k}$ to $u$, if $(x_{k})_{k}$ is a sequence of $\mathbb{R}^{k}$ which converges to $x$ then $((u^{k}(t,x_{k}))_{k})$ converges to $u(t,x)$.\\
Now let us consider a subsequence which we still denote by $\{k\}$ such that $\sup_{s\leq T}\left|X^{t,x}_{s}-^{k}X^{t,x}_{s}\right|^{2}\substack{\displaystyle\longrightarrow 0\\ k\rightarrow+\infty}$, $\mathbb{P}$-a.s. (and then $\left|X^{t,x}_{s-}-^{k}X^{t,x}_{s-}\right|\substack{\displaystyle\longrightarrow 0\\ k\rightarrow+\infty}$ since $\left|X^{t,x}_{s-}-^{k}X^{t,x}_{s-}\right|\leq \sup_{s\leq T}\left|X^{t,x}_{s}-^{k}X^{t,x}_{s}\right|^{2}$). By (\ref{4.50}), this subsequence exists. As the mapping $x\mapsto \beta(t,x,e)$ is Lipschitz then the sequence
\begin{eqnarray}\label{4.61}
&{}{}&\left(^{k}U^{t,x}_{s}(e)1_{\{|e|\geq\frac{1}{k}\}}\right)_{k}=\left((u^{k}_{i}(s,^{k}X^{t,x}_{s-}+\beta(s,^{k}X^{t,x}_{s-},e))-u^{k}_{i}(s,^{k}X^{t,x}_{s-}))1_{\{|e|\geq\frac{1}{k}\}}\right)_{k\geq 1}\substack{\displaystyle\longrightarrow {}\\ k\rightarrow+\infty}\nonumber\\
&{}{}& (u_{i}(s,X^{t,x}_{s-}+\beta(s,X^{t,x}_{s-},e))-u_{i}(s,X^{t,x}_{s-})),\quad d\mathbb{P}\otimes ds\otimes d\lambda-a.e.\quad \text{on }\Omega\times[t,T]\times E\quad
\end{eqnarray}
for any $i=1,\ldots,m$. Finally from (\ref{4.60}) we deduce that
\begin{equation}\label{4.62}
U^{t,x}_{s}(e)=(u_{i}(s,X^{t,x}_{s-}+\beta(s,X^{t,x}_{s-},e))-u_{i}(s,X^{t,x}_{s-})),\quad\text{on }\Omega\times[t,T]\times E
\end{equation}
which is the desired result.

\section{The main result}
Unlike Barles and al.\cite{bar} our result on viscosity solutions is established for the following definition.
\begin{definition}\label{def5.1}
We say that a family of deterministics functions $u=(u^{i})_{i=1,m}$ which belongs to $\mathcal{M}\quad\forall i\in\{1,\ldots,m\}$ is a viscosity sub-solution (resp. super-solution) of the IPDE (\ref{eq1}) if:\\
$(i)\quad \forall x\in\mathbb{R}^{k}$, $u^{i}(x,T)\leq g^{i}(x)$ (resp. $u^{i}(x,T)\geq g^{i}(x)$);\\
$(ii)\quad\text{For any } (t,x)\in[0,T]\times\mathbb{R}^{k}$ and any function $\phi$ of class $C^{1,2}([0,T]\times\mathbb{R}^{k})$ such that $(t,x)$ is a global maximum  point of $u^{i}-\phi$ (resp. global minimum  point of $u^{i}-\phi$) and $(u^{i}-\phi)(t,x)=0$ one has
\begin{equation}\label{5.63}
-\partial_{t}\phi(t,x)-\mathcal{L}^{X}\phi(t,x)-h^{i}(t,x,(u^{j}(t,x))_{j=1,m},\sigma^{\top}(t,x))D_{x}\phi(t,x),B_{i}u^{i}(t,x))\leq 0 
\end{equation}
$\left(resp.
\right.$
\begin{equation}\label{5.64}
\left.
-\partial_{t}\phi(t,x)-\mathcal{L}^{X}\phi(t,x)-h^{i}(t,x,(u^{j}(t,x))_{j=1,m},\sigma^{\top}(t,x))D_{x}\phi(t,x),B_{i}u^{i}(t,x))\geq 0\right).
\end{equation}
The family $u=(u^{i})_{i=1,m}$ is a viscosity solution of (\ref{eq1}) if it is both a viscosity sub-solution and viscosity super-solution.\\
Note that $\mathcal{L}^{X}\phi(t,x)=b(t,x)^{\top}\mathrm{D}_{x}\phi(t,x)+\frac{1}{2}\mathrm{Tr}(\sigma\sigma^{\top}(t,x)\mathrm{D}^{2}_{xx}\phi(t,x))+\mathrm{K}\phi(t,x)$;\\
where $\mathrm{K}\phi(t,x)=\displaystyle\int_{\mathrm{E}}(\phi(t,x+\beta(t,x,e))-\phi(t,x)-\beta(t,x,e)^{\top}\mathrm{D}_{x}\phi(t,x))\lambda(de).$
\end{definition}
\begin{theorem}
Under assumptions (\textbf{H1}) and (\textbf{H2}), the IPDE (\ref{eq1}) has unique solution which is the $m$-tuple of functions $(u^{i})_{i=1,m}$ defined in proposition \ref{prop3.3} by (\ref{3.18}).  
\end{theorem}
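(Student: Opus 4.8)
The plan is to prove the two assertions separately: first that the $m$-tuple $(u^{i})_{i=1,m}$ produced by Proposition \ref{prop3.3} is a viscosity solution of (\ref{eq1}) in the sense of Definition \ref{def5.1}, and then that it is the only one in the natural class. The cornerstone of both parts is the probabilistic representation already at hand: by Proposition \ref{prop3.3}, $Y^{i;t,x}_{s}=u^{i}(s,X^{t,x}_{s})$ for all $s\in[t,T]$, and by Proposition \ref{pro4.3}, $U^{i;t,x}_{s}(e)=u^{i}(s,X^{t,x}_{s-}+\beta(s,X^{t,x}_{s-},e))-u^{i}(s,X^{t,x}_{s-})$, $d\mathbb{P}\otimes ds\otimes d\lambda$-a.e. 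Feeding these into the definition (\ref{2.13}) of $f^{(i)}$ shows that along the forward flow the last argument of $h^{(i)}$ is \emph{exactly} the genuine nonlocal quantity $\mathrm{B}_{i}u^{i}(s,X^{t,x}_{s-})$, and not a test-function surrogate. This is precisely the point where the present approach departs from \cite{bar}, and it is what makes Definition \ref{def5.1}, rather than the Barles--Buckdahn--Pardoux formulation, the correct notion here.

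For existence I would argue the sub-solution inequality (the super-solution case being symmetric). Fix $i$ and $(t,x)$, and take $\phi\in C^{1,2}([0,T]\times\mathbb{R}^{k})$ with $(t,x)$ a global maximum of $u^{i}-\phi$ and $(u^{i}-\phi)(t,x)=0$. On a short interval $[t,t+h]$ I would apply It\^o's formula to $\phi(s,X^{t,x}_{s})$, producing the drift $(\partial_{t}\phi+\mathcal{L}^{X}\phi)(s,X^{t,x}_{s})$ together with a Brownian and a compensated-jump martingale, and compare it with the backward equation for $Y^{i;t,x}_{s}=u^{i}(s,X^{t,x}_{s})\le\phi(s,X^{t,x}_{s})$. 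Using that $\phi$ touches $u^{i}$ at $(t,x)$, the Lipschitz property (\textbf{H2})-(i) of $h^{(i)}$, the identification of the nonlocal argument recalled above, and the a priori estimates of Proposition \ref{pro3.1} and Lemma \ref{lem4.1}, one obtains a one-sided inequality at level $h$; dividing by $h$, letting $h\downarrow 0$, and invoking the continuity of $b,\sigma,\beta,\gamma^{i},h^{(i)},g^{i}$ together with dominated convergence then yields (\ref{5.63}) at $(t,x)$. The terminal condition $u^{i}(T,x)=g^{i}(x)$ is read off directly from $Y^{i;T,x}_{T}=g^{i}(X^{T,x}_{T})=g^{i}(x)$.

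For uniqueness, let $v=(v^{i})_{i=1,m}$ be any viscosity solution of (\ref{eq1}) with $v^{i}\in\mathcal{M}$ for every $i$; membership in $\mathcal{M}$ already forces a concave, Bihari-admissible, polynomially weighted modulus of continuity in $x$, so $v^{i}\in\Pi^{c}_{g}$. I would then run the standard doubling-of-variables comparison: introduce $v^{i}(t,x)-v^{i}(t,x')-\varepsilon^{-1}|x-x'|^{2}$ with a growth penalization at infinity, use the sub- and super-solution inequalities for $v$ at the doubled points, and pass to the limit. The decisive step is closing the resulting differential inequality for the modulus: this cannot be done by Gronwall's lemma, since only a concave Mao modulus $\rho$ is available, and must instead be carried out by Bihari's inequality, exactly as in the proof of (\ref{4.50}) and as in \cite{fan}. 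This gives a comparison principle in the class, and since $(u^{i})$ is simultaneously a sub- and a super-solution one deduces $v^{i}\le u^{i}\le v^{i}$. Alternatively, and more in keeping with the rest of the paper, one may show via the truncation argument of Proposition \ref{pro4.3} that a Definition \ref{def5.1} solution lying in $\mathcal{M}$ is also a solution in the sense of Definition \ref{def5.3}, and then invoke the uniqueness recalled in Proposition \ref{prop3.3}.

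I expect the main obstacle to be twofold. In the existence part it is the rigorous passage to the limit in the nonlocal and jump terms of the test-function computation: since the jumps of $X^{t,x}$ move the base point away from $x$, the term $\mathrm{B}_{i}u^{i}$ along the path must be controlled through (\ref{2.7})-(i), Proposition \ref{pro4.3}, Lemma \ref{lem4.1} and dominated convergence, rather than by a naive pointwise bound. In the uniqueness part it is pushing the doubling-of-variables estimate through with only a concave modulus at one's disposal, where Bihari's inequality is the essential substitute for Gronwall's.
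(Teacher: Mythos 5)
Your existence argument takes a genuinely different route from the paper's, and as sketched it has a soft spot; your uniqueness argument has a real gap. For existence the paper never runs the direct It\^o/test-function computation you describe. Instead it \emph{freezes} the nonlocal term: using (\ref{3.18}) and Proposition \ref{pro4.3} it writes the auxiliary BSDE whose generator is $h^{(i)}$ with last argument $\int_{E}\gamma^{i}(r,X^{t,x}_{r},e)\bigl(u^{i}(r,X^{t,x}_{r-}+\beta(r,X^{t,x}_{r-},e))-u^{i}(r,X^{t,x}_{r-})\bigr)\lambda(de)$, a known quantity along the flow, applies Proposition \ref{prop3.3} to this frozen, $q$-independent equation (for which the monotonicity hypotheses of that proposition are vacuous), and then identifies the frozen PDE with (\ref{eq1}) in the sense of Definition \ref{def5.1} through the representation of $U^{i;t,x}$. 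Your direct verification could plausibly be completed in the same spirit (after freezing, the generator no longer depends on the jump component, so a comparison theorem for the auxiliary BSDE on $[t,t+h]$ is available), but as written you pass to the limit in $\mathbb{E}\int_{t}^{t+h}f^{(i)}(s,X^{t,x}_{s},Y_{s},Z_{s},U_{s})\,ds$ without explaining how the $Z_{s}$-argument gets matched with $\sigma^{\top}(t,x)D_{x}\phi(t,x)$; that step is exactly what forces the comparison/auxiliary-BSDE device in the classical proofs and cannot be dismissed with ``Lipschitz plus dominated convergence''.

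Uniqueness is where the proposal breaks down. A doubling-of-variables comparison principle for (\ref{eq1}) is not available under (\textbf{H1})--(\textbf{H2}) alone: the system is coupled through $(u^{j})_{j=1,m}$ with no quasi-monotonicity, and neither monotonicity of $h^{(i)}$ in $q$ nor $\gamma^{i}\geq 0$ is assumed --- these are precisely the hypotheses that make the Barles--Buckdahn--Pardoux comparison work, and dispensing with them is the point of the paper; Bihari's inequality replaces Gronwall in the a priori estimates, it does not replace monotonicity in a viscosity comparison argument. Your fallback (show that a Definition \ref{def5.1} solution is a Definition \ref{def5.3} solution and quote the uniqueness of Proposition \ref{prop3.3}) suffers from the same defect: both the passage from $B_{i}u^{i}$ to $B_{i}\phi$ at a touching point and the quoted uniqueness statement require $\gamma^{i}\geq 0$ and monotonicity in $q$. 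The paper's uniqueness is instead probabilistic: given another Definition \ref{def5.1} solution $\bar{u}\in\mathcal{M}$, it forms the BSDE with the nonlocal term frozen at $\bar{u}$, observes that $\bar{u}$ is a viscosity solution of the corresponding frozen ($q$-independent) PDE so that Proposition \ref{prop3.3} applied to \emph{that} equation gives $\bar{u}=v$, the Feynman--Kac function of the frozen BSDE; it then identifies $\overline{U}^{i;t,x}_{s}(e)$ with the increments of $\bar{u}$ by the truncation argument of Proposition \ref{pro4.3}, concludes that $(\overline{Y},\overline{Z},\overline{U})$ solves the original BSDE (\ref{3.17}), and obtains $\bar{u}=u$ from uniqueness of that BSDE. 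Some mechanism of this kind is needed; the comparison-principle route you propose would not go through under the stated assumptions.
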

\newpage
\begin{proof}
\emph{\underline{Step $1$}:} \emph{Existence}\\
Assume that assumptions (\textbf{H1}) and (\textbf{H2}) are fulfilled, then the following multi-dimensional BSDEs with jumps
\begin{equation}\label{5.65}
\left
\{\begin{array}{ll}
(i)~\underline{\vec{Y}}^{t,x}:=(\underline{Y}^{i;t,x})_{i=1,m}\in\mathcal{S}^{2}(\mathbb{R}^{m}),~\underline{Z}^{t,x}:=(\underline{Z}^{i;t,x})_{i=1,m}\in\mathbb{H}^{2}(\mathbb{R}^{m\times d}),~\underline{U}^{t,x}:=(\underline{U}^{i;t,x})_{i=1,m}\in\mathbb{H}^{2}(\mathbb{L}^{2}_{m}(\lambda));\\
(ii)~\underline{Y}^{i;t,x}_{s}= g^{i}(X^{t,x}_{T})-\displaystyle\int^{T}_{s}\underline{Z}^{i;t,x}\mathrm{d}\mathrm{B}_{r}-\displaystyle\int^{T}_{s}
\displaystyle\int_{\mathrm{E}}\underline{U}^{i;t,x}_{r}(e)\tilde{\mu}(\mathrm{d}r,\mathrm{d}e).\\
\quad\quad\quad+\displaystyle\int^{T}_{s}h^{(i)}(r,X^{t,x}_{r},\underline{Y}^{i;t,x}_{r},\underline{Z}^{i;t,x}_{r},\displaystyle\int_{\mathrm{E}}\gamma^{i}(t,X^{t,x}_{r},e)\{(u^{i}(t,X^{t,x}_{r-}+\beta(t,X^{t,x}_{r-},e))-u^{i}(t,X^{t,x}_{r-}))\}\,\lambda(de))dr\\
(iii)~\underline{Y}^{i;t,x}_{T}= g^{i}(X^{t,x}_{T}).
\end{array}
\right.
\end{equation}
has unique solution  $(\underline{Y},\underline{Z},\underline{U})$.
Next as for any $i=1,\ldots,m$, $u^{i}$ belongs to $\mathcal{M}$, then by proposition \ref{prop3.3} the (\ref{3.18}), there exists a family of deterministics continuous functions of polynomial growth $(\underline{u}^{i})_{i=1,m}$ that fact for any $(t,x)\in[0,T]\times\mathbb{R}^{k}$,
$$\forall s\in[t,T],\qquad \underline{Y}^{i;t,x}_{s}=\underline{u}^{i}(s,X^{t,x}_{s}).$$
Such that by the same proposition, the family $(\underline{u}^{i})_{i=1,m}$ is a viscosity solution of the following system:
\begin{equation}\label{5.66}
\left
\{\begin{array}{ll}
-\partial_{t}\underline{u}^{i}(t,x)-b(t,x)^{\top}\mathrm{D}_{x}\underline{u}^{i}(t,x)-\frac{1}{2}\mathrm{Tr}(\sigma\sigma^{\top}(t,x)\mathrm{D}^{2}_{xx}\underline{u}^{i}(t,x))\\
\quad\quad-\mathrm{K}_{i}\underline{u}^{i}(t,x)-\mathit{h}^{(i)}(t,x,(\underline{u}^{j}(t,x))_{j=1,m},(\sigma^{\top}\mathrm{D}_{x}\underline{u}^{i})(t,x),\mathrm{B}_{i}u^{i}(t,x))=0,\quad (t,x)\in\left[ 0,T\right] \times\mathbb{R}^{k};\\
u^{i}(T,x)=g^{i}(x).
\end{array}
\right.
\end{equation}
Now we have the family $(\underline{u}^{i})_{i=1,m}$ is a viscosity solution, our main objective is to found relation between $(\underline{u}^{i})_{i=1,m}$ and $(u^{i})_{i=1,m}$ which is defined in (\ref{3.18}).\\
For this, let us consider the system of BSDE with jumps
\begin{equation}\label{5.67}
\left
\{\begin{array}{ll}
(i)~\vec{Y}^{t,x}:=(Y^{i;t,x})_{i=1,m}\in\mathcal{S}^{2}(\mathbb{R}^{m}),~Z^{t,x}:=(Z^{i;t,x})_{i=1,m}\in\mathbb{H}^{2}(\mathbb{R}^{m\times d}),~\\U^{t,x}:=(U^{i;t,x})_{i=1,m}\in\mathbb{H}^{2}(\mathbb{L}^{2}_{m}(\lambda));\\
(ii)~Y^{i;t,x}_{s}= g^{i}(X^{t,x}_{T})-\displaystyle\int^{T}_{s}Z^{i;t,x}\mathrm{d}
\mathrm{B}_{r}-\displaystyle\int^{T}_{s}
\displaystyle\int_{\mathrm{E}}U^{i;t,x}_{r}(e)\tilde{\mu}(\mathrm{d}r,\mathrm{d}e).\\
\quad\quad\quad+\displaystyle\int^{T}_{s}h^{(i)}(r,X^{t,x}_{r},Y^{i;t,x}_{r},\underline{Z}^{i;t,x}_{r},\displaystyle\int_{\mathrm{E}}\gamma^{i}(t,X^{t,x}_{r},e)U^{i;t,x}_{r}(e)\,\lambda(de))dr;\\
(iii)~Y^{i;t,x}_{T}=g(X^{t,x}_{T}). 
\end{array}
\right.
\end{equation}
By uniqueness of the solution of the BSDEs with jumps (\ref{5.64}), that for any $s\in[t,T]$ and $\forall i\in\{1\ldots,m\}$, $\underline{Y}^{i;t,x}_{s}=Y^{i;t,x}_{s}$.\\
Therefore  $\underline{u}^{i}=u^{i}$, such that by (\ref{4.60}) we obtain $U^{t,x}_{s}(e)=(u_{i}(s,X^{t,x}_{s-}+\beta(s,X^{t,x}_{s-},e))-u_{i}(s,X^{t,x}_{s-})),\quad\text{on }\Omega\times[t,T]\times E$, which give the viscosity solution in the sense of definition $5.1$ (see \cite{hama}) by pluging (\ref{4.61}) in $h^{(i)}$ of (\ref{5.66}).

\emph{\underline{Step $2$}:} \emph{Uniqueness}\\

For uniqueness, let $(\overline{u}^{i})_{i=1,m}$ be another family of $\mathcal{M}$ which is solution viscosity of the system (\ref{eq1}) in the sense of definition $5.1$ and we consider BSDE with jumps defined with $\overline{u}^{i}$.   
\begin{equation}\label{5.68}
\left
\{\begin{array}{ll}
(i)~\vec{\overline{Y}}^{t,x}:=(\overline{Y}^{i;t,x})_{i=1,m}\in\mathcal{S}^{2}(\mathbb{R}^{m}),~\overline{Z}^{t,x}:=(\overline{Z}^{i;t,x})_{i=1,m}\in\mathbb{H}^{2}(\mathbb{R}^{m\times d}),~\overline{U}^{t,x}:=(\overline{U}^{i;t,x})_{i=1,m}\in\mathbb{H}^{2}(\mathbb{L}^{2}_{m}(\lambda));\\
(ii)~\overline{Y}^{i;t,x}_{s}= g^{i}(X^{t,x}_{T})-\displaystyle\int^{T}_{s}\overline{Z}^{i;t,x}\mathrm{d}\mathrm{B}_{r}-\displaystyle\int^{T}_{s}\displaystyle\int_{\mathrm{E}}\overline{U}^{i;t,x}_{r}(e)\tilde{\mu}(\mathrm{d}r,\mathrm{d}e).\\
\quad\quad\quad+\displaystyle\int^{T}_{s}h^{(i)}(r,X^{t,x}_{r},\overline{Y}^{i;t,x}_{r},\overline{Z}^{i;t,x}_{r},\displaystyle\int_{\mathrm{E}}\gamma^{i}(t,X^{t,x}_{r},e)(\overline{u}_{i}(s,X^{t,x}_{s-}+\beta(s,X^{t,x}_{s-},e))-\overline{u}_{i}(s,X^{t,x}_{s-}))\,\lambda(de))dr;\\
(iii)~\overline{Y}^{i;t,x}_{T}=g(X^{t,x}_{T}). 
\end{array}
\right.
\end{equation}
By Feynman Kac formula $\overline{u}^{i}(s,X^{t,x}_{s})=Y^{i;t,x}_{s}$ where $Y^{i;t,x}_{s}$ satisfies the BSDE with jumps (\ref{eq2}) associated to IPDE (\ref{eq1}).\\
Since that the BSDE with jumps (\ref{5.66}) has solution and it is unique by   
assumed that (\textbf{H1}) and (\textbf{H2}) are verified. By proposition \ref{prop3.3} the (\ref{3.18}), there exists a family of deterministic continuous functions of polynomial growth $(v^{i})_{i=1,m}$ that fact for any $(t,x)\in[0,T]\times\mathbb{R}^{k}$,
$$\forall s\in[t,T],\qquad \overline{Y}^{i;t,x}_{s}=v^{i}(s,X^{t,x}_{s}).$$
Such that by the same proposition, the family $(v^{i})_{i=1,m}$ is a viscosity solution of the following system: 
\begin{equation}\label{5.69}
\left
\{\begin{array}{ll}
-\partial_{t}v^{i}(t,x)-b(t,x)^{\top}\mathrm{D}_{x}v^{i}(t,x)-\frac{1}{2}\mathrm{Tr}(\sigma\sigma^{\top}(t,x)\mathrm{D}^{2}_{xx}v^{i}(t,x))\\
\quad\quad-\mathrm{K}_{i}v^{i}(t,x)-\mathit{h}^{(i)}(t,x,(v^{j}(t,x))_{j=1,m},(\sigma^{\top}\mathrm{D}_{x}v^{i})(t,x),\mathrm{B}_{i}\overline{u}^{i}(t,x))=0,\quad (t,x)\in\left[ 0,T\right] \times\mathbb{R}^{k};\\
u^{i}(T,x)=g^{i}(x).
\end{array}
\right.
\end{equation}
By uniqueness of solution of (\ref{5.67}) $\overline{u}^{i}$ is viscosity solution of (\ref{5.68}); and by proposition \ref{prop3.3} $v^{i}=\overline{u}^{i}$ $\forall i\in\{1,\ldots,m\}$.\\
Now for completing our proof we show that on $\Omega\times[t,T]\times E$, $ds\otimes d\mathbb{P}\otimes d\lambda-\text{a.e.}\quad\forall i\in\{1,\ldots, m\}$;
\begin{eqnarray}\label{5.71}
\overline{U}^{i;t,x}_{s}(e) & = & (v^{i}(s,X^{t,x}_{s-}+\beta(s,X^{t,x}_{s-},e))-v^{i}(s,X^{t,x}_{s-}))\nonumber\\
{} & = & (\overline{u}_{i}(s,X^{t,x}_{s-}+\beta(s,X^{t,x}_{s-},e))-\overline{u}_{i}(s,X^{t,x}_{s-})).
\end{eqnarray} 
By Remark $3.4$ in \cite{hama}; let us considere $(x_{k})_{k\geq 1}$ a sequence of $\mathbb{R}^{k}$ which converges to $x\in\mathbb{R}^{k}$ and the two following BSDE with jumps (adaptation is w.r.t. $\mathcal{F}^{k}$):
\begin{equation}\label{5.72}
\left
\{\begin{array}{ll}
(i)~\vec{\overline{Y}}^{k,t,x}:=(\overline{Y}^{i;k,t,x})_{i=1,m}\in\mathcal{S}^{2}(\mathbb{R}^{m}),~\overline{Z}^{k,t,x}:=(\overline{Z}^{i;k,t,x})_{i=1,m}\in\mathbb{H}^{2}(\mathbb{R}^{m\times d}),~\overline{U}^{k,t,x}:=(\overline{U}^{i;k,t,x})_{i=1,m}\in\mathbb{H}^{2}(\mathbb{L}^{2}_{m}(\lambda));\\
(ii)~\overline{Y}^{i;k,t,x}_{s}= g^{i}(X^{k,t,x}_{T})-\displaystyle\int^{T}_{s}\overline{Z}^{i;k,t,x}\mathrm{d}\mathrm{B}_{r}-\displaystyle\int^{T}_{s}
\displaystyle\int_{\mathrm{E}}\overline{U}^{i;k,t,x}_{r}(e)\tilde{\mu}(\mathrm{d}r,\mathrm{d}e)\\
\quad\quad\quad+\displaystyle\int^{T}_{s}h^{(i)}\left(r,X^{k,t,x}_{r},\overline{Y}^{i;k,t,x}_{r},\overline{Z}^{i;k,t,x}_{r},
\right.\\

\left.
\qquad\qquad\qquad\qquad\qquad\displaystyle\int_{\mathrm{E}}\gamma^{i}(t,X^{k,t,x_{k}}_{r},e)(\overline{u}_{i}(s,X^{k,t,x_{k}}_{s-}+\beta(s,X^{k,t,x_{k}}_{s-},e))-\overline{u}_{i}(s,X^{k,t,x_{k}}_{s-}))\,\lambda(de)\right)dr;\\
(iii)~\overline{Y}^{i;k,t,x_{k}}_{T}=g(X^{k,t,x_{k}}_{T}).
\end{array}
\right.
\end{equation}
and
\begin{equation}\label{5.73}
\left
\{\begin{array}{ll}
(i)~\vec{\overline{Y}}^{k,t,x_{k}}:=(\overline{Y}^{i;k,t,x_{k}})_{i=1,m}\in\mathcal{S}^{2}(\mathbb{R}^{m}),~\overline{Z}^{k,t,x_{k}}:=(\overline{Z}^{i;k,t,x_{k}})_{i=1,m}\in\mathbb{H}^{2}(\mathbb{R}^{m\times d}),\\\qquad\qquad\overline{U}^{k,t,x_{k}}:=(\overline{U}^{i;k,t,x_{k}})_{i=1,m}\in\mathbb{H}^{2}(\mathbb{L}^{2}_{m}(\lambda));\\
(ii)~\overline{Y}^{i;k,t,x_{k}}_{s}= g^{i}(X^{k,t,x_{k}}_{T})-\displaystyle\int^{T}_{s}\overline{Z}^{i;k,t,x_{k}}\mathrm{d}\mathrm{B}_{r}-\displaystyle\int^{T}_{s}
\displaystyle\int_{\mathrm{E}}\overline{U}^{i;k,t,x_{k}}_{r}(e)\tilde{\mu}(\mathrm{d}r,\mathrm{d}e)\\
\quad\quad\quad+\displaystyle\int^{T}_{s}h^{(i)}\left(r,X^{k,t,x_{k}}_{r},\overline{Y}^{i;k,t,x_{k}}_{r},\overline{Z}^{i;k,t,x_{k}}_{r},
\right.\\

\left.
\qquad\qquad\qquad\qquad\qquad\displaystyle\int_{\mathrm{E}}\gamma^{i}(t,X^{k,t,x_{k}}_{r},e)(\overline{u}_{i}(s,X^{k,t,x_{k}}_{s-}+\beta(s,X^{k,t,x_{k}}_{s-},e))-\overline{u}_{i}(s,X^{k,t,x_{k}}_{s-}))\,\lambda(de)\right)dr;\\
(iii)~\overline{Y}^{i;k,t,x_{k}}_{T}=g(X^{k,t,x_{k}}_{T}). 
\end{array}
\right.
\end{equation}
By proof of step $2$ of proposition \ref{pro4.3}, $(\overline{Y}^{i;k,t,x},\overline{Z}^{i;k,t,x},\overline{U}^{i;k,t,x}1_{\{|e|\geq \frac{1}{k}\}})_{k}$ converge to $(\overline{Y}^{i;t,x}, \overline{Z}^{i;t,x},\overline{U}^{i;t,x})$ in $\mathcal{S}^{2}(\mathbb{R})\times\mathbb{H}^{2}(\mathbb{R}^{\kappa\times d})\times\mathbb{H}^{2}(\mathbb{L}^{2}(\lambda)) $.\\
Let $((v^{k}_{i=1,m}))_{k\geq 1}$ be the sequence of continuous deterministics functions such that for any $t\leq T$ and $s\in[t,T]$,\\
$$\overline{Y}^{i;k,t,x}_{s}=\overline{v}^{k}_{i}(s,^{k}X^{t,x}_{s})~\text{and } \overline{Y}^{i;k,t,x_{k}}_{s}=\overline{v}^{k}_{i}(s,^{k}X^{t,x_{k}}_{s})~~\forall i=1,\ldots,m.$$
Such that we have respectively by proof of proposition \ref{pro4.3} in step $1$ and step $2$:\\
$(i)~\overline{U}^{i;k,t,x}_{s}(e)=(v^{i}(s,^{k}X^{t,x}_{s-}+\beta(s,^{k}X^{t,x}_{s-},e))-v^{i}(s,^{k}X^{t,x}_{s-}))$, $ds\otimes d\mathbb{P}\otimes d\lambda_{k}$-a.e on $[t,T]\times\Omega\times E$;\\
$(ii)~\text{the sequence } ((v^{k}_{i=1,m}))_{k\geq 1}$ converges to $v^{i}(t,x)$
by using (\ref{4.59}).\\
So that $x_{k}\longrightarrow_{k} x$ we take the following estimation which is obtaining by Ito's formula and by the properties of $h^{(i)}$.
%\begin{eqnarray}\label{5.74}
%& {}{} &\mathbb{E}\left[\left|\vec{Y}^{k,t,x_{k}}_{s}-Y^{k,t,x}_{s}\right|^{2}+\displaystyle\int^{T}_{0}\left\lbrace\left|Z^{k,t,x_{k}}_{s}-Z^{k,t,x}_{s}\right|^{2}
%\right.
%\right.
%\nonumber\\
%&{}{}&\qquad\qquad\qquad\left.\left.+\displaystyle\int_{E}\left|U^{k,t,x_{k}}_{s}-U^{k,t,x}_{s}\right|^{2}\,\lambda_{k}(de)\right\rbrace\,ds\right]\nonumber\\
%& {}{} &\leq\mathbb{E}\left[\left|g(X^{t,x}_{T})-g(^{k}X^{t,x}_{T})\right|^{2}\right]+C\mathbb{E}\left[\displaystyle\int^{T}_{s}\left|\vec{Y}^{t,x}_{s}-^{k}Y^{t,x}_{s}\right|^{2}\right]\nonumber\\
%& {}{} &+C\mathbb{E}\left[\displaystyle\int^{T}_{0}\left|X^{t,x}_{r}-^{k}X^{t,x}_{r}\right|^{2}(1+\left|X^{t,x}_{r}\right|^{p}+\left|^{k}X^{t,x}_{r}\right|^{p})^{2}\,dr\right]\nonumber\\
%& {}{} &+C\displaystyle\sum_{i=1,m}\mathbb{E}\left[\displaystyle\int^{T}_{s}\left|\mathrm{B}_{i}\overline{u}^{i}(r,^{k}X^{t,x_{k}}_{r})-\mathrm{B}_{i}\overline{u}^{i}(r,X^{t,x}_{r})\right|^{2}\,dr\right].\nonumber\\
%\end{eqnarray}
\begin{eqnarray}\label{5.74}
& {}{} &\mathbb{E}\left[\left|\vec{Y}^{k,t,x_{k}}_{s}-Y^{k,t,x}_{s}\right|^{2}+\displaystyle\int^{T}_{0}\left\lbrace\left|Z^{k,t,x_{k}}_{s}-Z^{k,t,x}_{s}\right|^{2}+\displaystyle\int_{E}\left|U^{k,t,x_{k}}_{s}-U^{k,t,x}_{s}\right|^{2}\,\lambda_{k}(de)\right\rbrace\,ds\right]\nonumber\\
& {}{} &\leq\mathbb{E}\left[\left|g(^{k}X^{t,x_{k}}_{T})-g(^{k}X^{t,x}_{T})\right|^{2}+2\displaystyle\int^{T}_{s}<\left(\vec{Y}^{k,t,x_{k}}_{r}-Y^{k,t,x}_{r}\right),^{k}\Delta h^{(i)}(r)>\,dr\right]\nonumber
\end{eqnarray}

Using the same arguments as in proof (\ref{4.51}), it follows that for $s=t$; $\forall i=1,\ldots,m$,\\
$$v^{k}_{i}(t,x_{k})\longrightarrow_{k}v^{k}_{i}(t,x).$$
Therefore by (i)-(ii) we have, for any $i=1,\ldots,m$,\\
\begin{equation}\label{5.75}
\overline{U}^{i;t,x}_{s}(e)=(v^{i}(s,X^{t,x}_{s-}+\beta(s,X^{t,x}_{s-},e))-v^{i}(s,X^{t,x}_{s-}))\quad ds\otimes d\mathbb{P}\otimes d\lambda-\text{a.e. in  }[t,T]\times\Omega\times E, \quad\forall i\in\{1,\ldots, m\}.
\end{equation}
By this result we can replace $(\overline{u}_{i}(s,X^{t,x}_{s-}+\beta(s,X^{t,x}_{s-},e))-\overline{u}_{i}(s,X^{t,x}_{s-}))$ by $\overline{U}^{i;t,x}_{s}(e)$ in (\ref{5.71}), we deduce that $(\overline{Y}^{t,x},\overline{Z}^{t,x},\overline{U}^{t,x})$ verifies: $\forall i\in\{1,\ldots,m\}$ 
\begin{equation}\label{5.76}
\left
\{\begin{array}{ll}
(i)~\vec{\overline{Y}}^{t,x}:=(\overline{Y}^{i;t,x})_{i=1,m}\in\mathcal{S}^{2}(\mathbb{R}^{m}),~\overline{Z}^{t,x}:=(\overline{Z}^{i;t,x})_{i=1,m}\in\mathbb{H}^{2}(\mathbb{R}^{m\times d}),~\overline{U}^{t,x}:=(\overline{U}^{i;t,x})_{i=1,m}\in\mathbb{H}^{2}(\mathbb{L}^{2}_{m}(\lambda));\\
(ii)~\overline{Y}^{i;t,x}_{s}= g^{i}(X^{t,x}_{T})-\displaystyle\int^{T}_{s}\overline{Z}^{i;t,x}_{r}\,\mathrm{d}
\mathrm{B}_{r}-\displaystyle\int^{T}_{s}
\displaystyle\int_{\mathrm{E}}\overline{U}^{i;t,x}_{r}(e)\,\tilde{\mu}(\mathrm{d}r,\mathrm{d}e).\\
\quad\quad\quad+\displaystyle\int^{T}_{s}h^{(i)}(r,X^{t,x}_{r},\overline{Y}^{i;t,x}_{r},\overline{Z}^{i;t,x}_{r},\displaystyle\int_{\mathrm{E}}\gamma^{i}(r,X^{t,x}_{r},e)\overline{U}^{i;t,x}_{r}\,\lambda(de))dr;\\
(iii)~\overline{Y}^{i;t,x}_{T}=g^{i}(X^{t,x}_{T}).
\end{array}
\right.
\end{equation}
It follows that $$\forall i\in\{1,\ldots,m\},\quad \overline{Y}^{i;t,x}={Y}^{i;t,x}.$$
With the uniqueness of solution (\ref{5.68}), we have $u^{i}=\overline{u}^{i}=v^{i}$ which means that the solution of (\ref{eq1}) in the sense of Definition \ref{def5.1} is unique inside the class $\mathcal{M}$. 
\end{proof}
\\
In this paper we have shown the existence and uniqueness of viscosity solution through BSDE by weakening the condition on the generator used in \cite{hama}.\\
The question that arose now is to have the existence and uniqueness of viscosity solution through the RBSDEs.
\newpage
\section*{Appendix}
\subsection*{Barles and al.'s definition for viscosity solution of IPDE (\ref{eq1})}
\begin{definition}\label{def5.3}
We say that a family of deterministics functions $u=(u^{i})_{i=1,m}$ which is continuous $\forall i\in\{1,\ldots,m\}$, is a viscosity sub-solution (resp. super-solution) of the IPDE (\ref{eq1}) if:\\
$(i)\quad \forall x\in\mathbb{R}^{k}$, $u^{i}(x,T)\leq g^{i}(x)$ (resp. $u^{i}(x,T)\geq g^{i}(x)$);\\
$(ii)\quad\text{For any } (t,x)\in[0,T]\times\mathbb{R}^{k}$ and any function $\phi$ of class $C^{1,2}([0,T]\times\mathbb{R}^{k})$ such that $(t,x)$ is a global maximum point of $u^{i}-\phi$ (resp. global minimum point of $u^{i}-\phi$) and $(u^{i}-\phi)(t,x)=0$,  one has
\begin{equation*}
\min\left\lbrace u^{i}(t,x)-\ell(t,x);-\partial_{t}\phi(t,x)-\mathcal{L}^{X}\phi(t,x)-h^{i}(t,x,(u^{j}(t,x))_{j=1,m},\sigma^{\top}(t,x))D_{x}\phi(t,x),B_{i}\phi(t,x))\right\rbrace\leq 0 
\end{equation*}
$\left(resp.
\right.$
\begin{equation*}
\left.
\min\left\lbrace u^{i}(t,x)-\ell(t,x);-\partial_{t}\phi(t,x)-\mathcal{L}^{X}\phi(t,x)-h^{i}(t,x,(u^{j}(t,x))_{j=1,m},\sigma^{\top}(t,x))D_{x}\phi(t,x),B_{i}\phi(t,x)(t,x))\right\rbrace\scriptstyle\geq 0\right).
\end{equation*}
The family $u=(u^{i})_{i=1,m}$ is a viscosity solution of (\ref{eq1}) if it is both a viscosity sub-solution and viscosity super-solution.\\
Note that $\mathcal{L}^{X}\phi(t,x)=b(t,x)^{\top}\mathrm{D}_{x}\phi(t,x)+\frac{1}{2}\mathrm{Tr}(\sigma\sigma^{\top}(t,x)\mathrm{D}^{2}_{xx}\phi(t,x))+\mathrm{K}\phi(t,x)$;\\
where $\mathrm{K}\phi(t,x)=\displaystyle\int_{\mathrm{E}}(\phi(t,x+\beta(t,x,e))-\phi(t,x)-\beta(t,x,e)^{\top}\mathrm{D}_{x}\phi(t,x))\lambda(de)$.
\end{definition}
\subsection*{Another Mao condition}
In this paper it was mainly a question of the p-order Mao condition, it is necessary to know that there exists another condition of Mao which is mainly used in the case of monotony of the generator for apply the comparison theorem at the viscosity solution.
\begin{definition}
$f$ satisfies the $p$-order one-sided Mao condition in $x$ i.e., there  exists a nondecreasing, concave function $\rho(\cdot):\mathbb{R}^{+}\mapsto\mathbb{R}^{+}$ with $\rho(0)=0$, $\rho(u)>0$, for $u>0$ and $\displaystyle\int_{0^{+}}\frac{du}{\rho(u)}=+\infty$, such that $d\mathbb{P}\times dt-$a.e., $\forall x, x^{'}\in\mathbb{R}^{k}\textrm{ and }\forall p\geq 2$,
$$<\frac{x-x'}{|x-x'|}\mathds{1}_{|x-x'|\neq 0},f(t,x,y,z,q)-f(t,x^{'},y,z,q)>\leq\rho^{\frac{1}{p}}(|x-x^{'}|^{p}).$$
\end{definition}
\begin{remark}
Applying  Cauchy-Schwartz inequality for the $p$-order one-sided Mao condition, we deduce the $p$-order Mao condition.
\end{remark}

\end{document}